\theoremstyle{plain}
\newtheorem{theorem}{Theorem}
\newtheorem{lemma}{Lemma}
\newtheorem*{corollary}{Corollary}
\theoremstyle{definition}
\newtheorem{definition}{Definition}
\title{A Family of Congruences for Rogers--Ramanujan Subpartitions}
\author{Nicolas Allen Smoot \\ Research Institute for Symbolic Computation \\ Johannes Kepler University \\ Linz, Austria}
\date{}
\begin{document}
\maketitle

\begin{abstract}
In 2015 Choi, Kim, and Lovejoy studied a weighted partition function, $A_1(m)$, which counted subpartitions with a structure related to the Rogers--Ramanujan identities.  They conjectured the existence of an infinite class of congruences for $A_1(m)$, modulo powers of 5.  We give an explicit form of this conjecture, and prove it for all powers of 5.
\end{abstract}

\section{Introduction}

The unrestricted integer partition function $p(m)$ has been studied since the time of Euler \cite{Euler}, but its number theoretic properties are not at all obvious.  It is not immediately clear, for example, when $p(m)$ is prime, or a square or higher power, or even what the parity of $p(m)$ is.  One of the first important results of the arithmetic properties of $p(m)$ was discovered by Ramanujan \cite{Ramanujan}, and proved in 1938 by Watson \cite{Watson}:

\begin{theorem}
Let $24m\equiv 1\pmod{5^{n}}$.  Then $p(m)\equiv 0\pmod{5^{n}}$.
\end{theorem}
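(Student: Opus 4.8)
The plan is to convert the congruence into a divisibility statement for a family of modular functions on $\Gamma_0(5)$ produced by iterating the operator $U_5$, and then to prove that divisibility by induction on $n$ while keeping the pole order of these functions under control.

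First I would encode $p$ modularly: with $q=e^{2\pi i\tau}$ and $\eta(\tau)=q^{1/24}\prod_{k\ge1}(1-q^{k})$ one has $\sum_{m\ge0}p(m)q^{24m-1}=1/\eta(24\tau)$. Writing $U_5\big(\sum a(n)q^{n}\big):=\sum a(5n)q^{n}$, a monomial $q^{24m-1}$ survives $n$ applications of $U_5$ exactly when $24m-1\equiv0\pmod{5^{n}}$, and it then becomes $p(m)q^{(24m-1)/5^{n}}$; hence
\[
U_5^{\,n}\!\left(\frac{1}{\eta(24\tau)}\right)=\sum_{24m\equiv1\ (5^{n})}p(m)\,q^{(24m-1)/5^{n}} .
\]
Since distinct $m$ give distinct exponents, the theorem is equivalent to the claim that every coefficient of $U_5^{\,n}\big(1/\eta(24\tau)\big)$ is divisible by $5^{n}$. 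Equivalently, in the variable $q$, the series $\mathcal L_{n}(q):=\sum_{m\ge0}p(5^{n}m+\lambda_{n})q^{m}$, where $24\lambda_{n}\equiv1\pmod{5^{n}}$, should lie in $5^{n}\,\mathbb Z[[q]]$, and $\mathcal L_{n+1}$ is obtained from $\mathcal L_{n}$ by extracting a single residue class of its coefficients modulo $5$, i.e.\ a twisted variant of $U_5$.

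Next I would place these functions inside an explicit finitely generated module. After a normalization — multiplying by a fixed power of $q$ and a fixed eta-quotient that is a unit in $\mathbb Z[[q]]$ — the function $\mathcal L_{n}$ becomes the $q$-expansion of a modular function $L_{n}$ on $\Gamma_0(5)$ that is holomorphic except for a pole at the cusp $\infty$. Since $\Gamma_0(5)$ has genus $0$ and its Hauptmodul $x:=\big(\eta(\tau)/\eta(5\tau)\big)^{6}=q^{-1}\prod_{k\ge1}(1-q^{k})^{6}(1-q^{5k})^{-6}$ has a simple pole at $\infty$, integrality of the $q$-expansions forces $L_{n}\in\mathbb Z[x]$, and the theorem becomes $L_{n}\equiv0\pmod{5^{n}}$. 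The base case is Ramanujan's identity $\sum_{m\ge0}p(5m+4)q^{m}=5\,(q^{5};q^{5})_\infty^{5}/(q;q)_\infty^{6}$, which says $L_{1}\in5\,\mathbb Z[x]$. For the inductive step I would establish an explicit formula for the action of (twisted) $U_5$ on powers of $x$, $U_5(x^{m})=\sum_{i}c_{m,i}\,x^{i}$ with $c_{m,i}\in\mathbb Z$ whose $5$-adic valuations grow with $m$, together with the companion relation expressing $L_{n+1}$ as $U_5$ of a fixed modular function times $L_{n}$ (equivalently, Watson's two-step relations linking $L_{n+2}$, $L_{n+1}$, $L_{n}$). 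An induction that propagates \emph{both} the bound $5^{n}\mid L_{n}$ and an upper bound on the pole order of $L_{n}$ at $\infty$ — so that re-expanding the relevant function over $\{x^{i}\}$ and applying $U_5$ never forfeits more powers of $5$ than are harvested — then closes and proves the theorem.

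The main obstacle is exactly that last input: obtaining the $U_5$-images of the $x^{m}$ and, above all, a sharp lower bound for the $5$-adic valuations of their coefficients. The induction is a contest between the powers of $5$ that $U_5$ supplies and those that can be lost when a function of large pole order is re-expanded over $\{x^{i}\}$; because a single application of $U_5$ improves the valuation only modestly, the two-step formulation and the parallel pole-order bookkeeping are forced rather than optional. Establishing those valuation estimates — whether through the classical Ramanujan--Watson manipulations using the modular equation of level $5$ and the Rogers--Ramanujan continued fraction, or through a careful cusp-by-cusp analysis of the modular functions involved — is the one step that genuinely requires work and cannot be black-boxed.
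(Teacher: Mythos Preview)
The paper does not prove this statement. Theorem~1 is quoted as a classical result of Ramanujan, established by Watson in 1938, and serves only as background and motivation; the paper's own work begins with Theorem~2, concerning $A_1(m)$. There is therefore no proof in the paper to compare your proposal against.

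That said, your sketch is an accurate outline of the Watson method: pass to modular functions on $\Gamma_0(5)$, use the Hauptmodul to express the iterates $L_n$ as integer polynomials, and propagate $5$-adic divisibility through the $U_5$ operator via the level-$5$ modular equation. You correctly identify the genuine difficulty --- the $5$-adic valuation estimates for the coefficients in $U_5(x^m)$ --- and you are honest that this is where the argument ceases to be an outline and becomes a proof. As a plan it is sound; as a proof it is incomplete exactly where you say it is.

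It is worth noting that the paper's proof of Theorem~2 is built on precisely this template, transplanted from $\Gamma_0(5)$ to $\Gamma_0(20)$: the functions $L_n$ of Section~3, the operators $U^{(j)}$, the modular equation for $t$ (Theorem~6), and the recursion of Lemma~3 are direct analogues of the machinery you describe. The added complication there is that $X_0(20)$ has genus $1$, so a single Hauptmodul no longer suffices and the authors work in a rank-$2$ $\mathbb{Z}[t]$-module generated by $1$ and $p_j$. If you want to see the valuation bookkeeping you flagged actually carried out --- Lemma~3, Lemma~4, and Theorem~7 --- the paper does this in full for the harder analogue, and the argument specializes cleanly to the $\Gamma_0(5)$ case you need.
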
  Ramanujan also found similar remarkable families of congruences for powers of 7 and 11.  To prove this theorem even in the single case $n=1$ is difficult.  The general theorem is of a considerably deep kind.

Since Ramanujan's discovery, similar families of congruences have been discovered in many different restricted partition functions.  While a similar approach to Watson's has proven useful for verifying these congruence families, various complications can arise in connection with the spaces of modular functions---and the associated modular curves---that underlie the different partition functions being studied.

We will concern ourselves here with an interesting new family of congruences for a weighted partition function, discovered by Choi, Kim, and Lovejoy \cite{CKL}, in 2015.  The partition function is related to the Rogers--Ramanujan identities, as well as to the unrestricted partition function $p(m)$.

Our method to resolve the complications associated with this new family of congruences is based largely on the methods developed by Paule and Radu to prove the Andrews--Sellers conjecture \cite{Paule}.  These methods themselves are related to the techniques developed by Atkin to prove Ramanujan's congruence family for $p(m)$ over powers of 11 \cite{Atkin}.

To begin, we define a class of subpartitions studied by Kolitsch \cite{Kolitsch}:

\begin{definition}
Let $\lambda$ be a partition of $m$.  The Rogers--Ramanujan subpartition of $\lambda$ is the unique subpartition with a maximal number of parts, in which the parts are nonrepeating, nonconsecutive, and larger than the remaining parts of $\lambda$.  More specifically, if $\lambda$ is the nonincreasing sequence $(a_1, a_2, ..., a_l, a_{l+1} ... a_k)$, then the Rogers--Ramanujan subpartition of $\lambda$ is the largest possible subpartition $(a_1, a_2, ..., a_l)$ with no repeated or consecutive parts, and with $a_l > a_{l+1}$ (If $l=k$, define $a_{l+1}=0$).  Here, $l$ is the length of the subpartition.
\end{definition}  At times we will denote such a subpartition a R--R subpartition.

For instance, the Rogers--Ramanujan subpartition of 

\begin{align*}
\lambda = (8,5,3,2,2,1,1,1)
\end{align*} is $(8,5,3)$, with length 3.  On the other hand, the Rogers--Ramanujan subpartition of 
\begin{align*}
\kappa = (8,8,2,2,1,1,1)
\end{align*} is simply the length-0 empty partition.

With this, we now define the partition functions $R_l(m)$, $A_1(m)$ as follows:

\begin{definition}
Let $R_l(m)$ be the number of partitions of $m$ containing a Rogers--Ramanujan subpartition of length $l$, and

\begin{align}
A_1(m) = \sum_{l\ge 0} l\cdot R_l(m).\label{defA1}
\end{align}
\end{definition}

For example, we consider $A_1(5)$.  Here we give the 7 partitions of 5, with the corresponding R--R subpartitions:

\begin{align*}
&(5)\supseteq (5),\\
&(4,1)\supseteq (4,1),\\
&(3,2)\supseteq (3),\\
&(3,1,1)\supseteq (3),\\
&(2,2,1),\\
&(2,1,1,1)\supseteq (2),\\
&(1,1,1,1,1).
\end{align*}  

So we find that of the seven partitions of 5, four of them contain a R--R subpartition of length 1, one partition contains a R--R subpartition of length 2, and two partitions contain no R--R subpartition.  We therefore have

\begin{align*}
A_1(5) = 1\cdot R_1(5)+2\cdot R_2(5) = 4+2 = 6.
\end{align*}

Choi, Kim, and Lovejoy proved \cite[Proposition 6.4]{CKL} that 

\begin{align}
A_1(25m+24)\equiv 0\pmod{5}.\label{case1}
\end{align}  From this and further numerical evidence, they conjectured the existence of a family of partition congruences, similar in kind to Ramanujan's classic congruences for $p(m)$.  We have proved the existence of this conjectured family of congruences, which we now give in a precise form.

\begin{theorem}
For $24m\equiv 1\pmod{5^{2n}},$ we have $A_1(m)\equiv 0\pmod{5^n}.$
\end{theorem}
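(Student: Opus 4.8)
The plan is to reduce, via a generating-function identity, to a Ramanujan-type congruence family for a single eta-quotient, and then to carry out the Atkin / Paule--Radu programme for that family. First I would establish
\[
\sum_{m\ge 0} A_1(m)\, q^m \;=\; \frac{1}{(q;q)_\infty}\sum_{n\ge 1} q^{n^2},
\]
which follows quickly from the definitions: the partitions of $m$ whose Rogers--Ramanujan subpartition has length at least $n$ are exactly those whose first $n$ parts satisfy $a_1>a_2>\cdots>a_n$ with all gaps at least $2$ and with $a_n$ strictly larger than every later part, and summing the corresponding generating function over the value of $a_n$ collapses to $q^{n^2}/(q;q)_\infty$; then $A_1(m)=\sum_{l\ge 0} l\,R_l(m)=\sum_{n\ge 1}\#\{\lambda\vdash m:\ \text{length}\ge n\}$. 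Setting $b(m):=\sum_{k\in\mathbb{Z}}p(m-k^2)$, the coefficients of $\theta(q)/(q;q)_\infty$ with $\theta(q)=\sum_{k\in\mathbb{Z}}q^{k^2}$, this identity reads $A_1(m)=\tfrac12\bigl(b(m)-p(m)\bigr)$.

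When $24m\equiv1\pmod{5^{2n}}$, Theorem 1 gives $p(m)\equiv 0\pmod{5^{2n}}$ --- far more than is needed --- so, $5^n$ being odd, it suffices to prove the congruence family $b(m)\equiv 0\pmod{5^n}$ under the same hypothesis. Now $\theta(q)/(q;q)_\infty=(q^2;q^2)_\infty^{5}\,(q;q)_\infty^{-3}\,(q^4;q^4)_\infty^{-2}$ is a weight-$0$ eta-quotient of level dividing $4$; after the standard substitution $q\mapsto q^{24}$ and multiplication by a suitable power of $q$ it becomes a modular function for a fixed group $\Gamma_0(N)$ with $5\mid N$. (One could instead avoid Theorem 1 and carry the pair $\theta(q)/(q;q)_\infty,\ 1/(q;q)_\infty$ through the whole argument, but the second member is exactly Watson's object, so it is cleanest to split it off here.)

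Following Atkin and Paule--Radu, I would form the operator $\mathcal{U}$ that, applied to the rescaled generating function $g$, extracts the terms whose exponent is divisible by $5^2$ and reindexes --- essentially a twisted $U_{25}=U_5\circ U_5$ --- so that $L_j:=\mathcal{U}^{\,j}(g)$ carries exactly the coefficients $b(m)$ with $24m\equiv1\pmod{5^{2j}}$. One shows the level stabilizes: each $L_j$ is a modular function for the \emph{same} group $\Gamma_0(N)$ (possibly after passing to an Atkin--Lehner quotient in order to reach genus $0$), holomorphic away from a controlled set of cusps. The space of such functions is a free module of small rank over $\mathbb{Z}[t]$ for a Hauptmodul $t$, with an explicit basis pinned down by the valence formula and a cusp analysis. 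The key structural fact is a \emph{modular equation}: $\mathcal{U}$ maps this module into itself, and its matrix on the basis is determined by finitely many $q$-expansion coefficients; this produces a linear recurrence for $L_{j+1}$ (in general three-term, involving $L_{j-1}$ as well) with coefficients in $\mathbb{Z}[t]$.

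The congruence \eqref{case1} of Choi--Kim--Lovejoy translates to $L_1\equiv 0\pmod 5$ in the module, and one or two further stages can be checked directly. I would then prove by induction on $j$ that every coordinate of $L_j$ in the fixed basis lies in $5^{j}\mathbb{Z}[t]$, which is precisely the asserted congruence family and so completes the proof. The induction step requires the matrix of $\mathcal{U}$, \emph{in a basis specially adapted to the prime $5$}, to send $5^{j}\mathbb{Z}[t]$-combinations into $5^{j+1}\mathbb{Z}[t]$-combinations; equivalently, after removing one power of $5$ from each entry one must still have honest polynomials, with enough extra divisibility on the diagonal to drive the induction. This is the crux and the expected main obstacle: because the hypothesis lives modulo $5^{2n}$ while the conclusion is only modulo $5^{n}$, a single $U_5$ contributes in effect only ``half a power of $5$'', so naive divisibility does not propagate, and the adapted basis --- plausibly together with an auxiliary sequence interleaved by the parity of $j$ --- must be engineered so that each application of $\mathcal{U}$ buys one full power of $5$. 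By comparison, checking that $\Gamma_0(N)$ (or its Atkin--Lehner quotient) has genus $0$ and that the module of allowed functions is free of the claimed rank is a necessary but comparatively routine preliminary.
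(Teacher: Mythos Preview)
Your proposal is correct and follows essentially the same route as the paper: the same generating-function reduction to the eta-quotient $(q^2;q^2)_\infty^{5}/\bigl((q;q)_\infty^{3}(q^4;q^4)_\infty^{2}\bigr)$, the same appeal to Theorem~1 to discard $p(m)$, and the same Paule--Radu induction via $U_5$-operators on a rank-$2$ $\mathbb{Z}[t]$-module with a parity-alternating pair of subspaces. The paper works on $\Gamma_0(20)$ (genus $1$, so no Hauptmodul there --- the $t$ used is the $\Gamma_0(5)$ Hauptmodul $\eta(5\tau)^6/\eta(\tau)^6$) and splits $U_{25}$ into two alternating operators $U^{(0)},U^{(1)}$ mapping between distinguished subspaces $X^{(0)},X^{(1)}$, which is exactly the ``auxiliary sequence interleaved by parity'' you anticipate.
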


We give the proof of Theorem 2 in Section 5.  In Section 2 we will define the generating function for $A_1(m)$, and reduce our problem to one more directly accessible in terms of eta quotients.  We will define the sequence $\mathcal{L} = (L_n)_{n\ge 1}$, together with the $U^{(j)}$ operators in Section 3.  In doing so, we will have provided an outline for the inductive proof of a stronger version of our theorem.

In Section~4 we will define the key spaces $X^{(j)}$ of modular functions over $\Gamma_0(20)$.  We will also use some key fundamental relations between specific modular functions, with a recursion defined by a modular equation for the prime 5, to prove some important lemmas which will lead up to our main proof.  Finally, in Section 6 we will list and discuss the proofs of our fundamental relations.

We wish to note that, as demonstrated in Section 2, our theorem provides a means of connecting $A_1(m)$, which has a clear association with the Rogers--Ramanujan identities, with Ramanujan's classic congruence family for $p(m)$ modulo powers of 5.  Moreover, our own numerical evidence indicates that there are no additional simple congruence families for $A_1(m)$ relative to powers of 7 or 11.  This would seem to highlight a peculiar significance of the number 5 in connecting $p(m)$ with the Rogers--Ramanujan identities.  Whether or not this observation is actually conducive to additional work is not certain, but the connection is curious nevertheless.

\section{Generating Function}

Hereafter, as in \cite[Chapter 2]{Andrews}, define 
\begin{align}
(a;b)_{r} = \prod_{n=0}^{r-1}(1-ab^n), \text{ and } (a;b)_{\infty} = \lim_{r\rightarrow\infty} (a;b)_r.\label{qpoch}
\end{align}

Recall \cite[Chapter 7]{Andrews} the generating function for the number of partitions into exactly $r$ parts, in which all parts are nonconsecutive and nonrepeating, is given by

\begin{align*}
\frac{q^{r^2}}{(q;q)_r}.
\end{align*}  However, if we allow the denominator to grow to $(q;q)_{\infty}$, we now generate the number of partitions of $m$ in which \textit{the first r parts are necessarily nonconsecutive and nonrepeating}, and larger than all remaining parts.

Notice, however, that such a partition may indeed have a larger number of large, nonconsecutive, nonrepeating parts; that is, all partitions containing a Rogers--Ramanujan subpartition of length $l\ge r$ are also accounted for with $q^{r^2}/(q;q)_{\infty}$.  In particular, if we sum from $r=1$ to $l$, i.e.,

\begin{align*}
\sum_{r=1}^{l}\frac{q^{r^2}}{(q;q)_{\infty}} = \frac{1}{(q;q)_{\infty}}\sum_{r=1}^{l}q^{r^2},
\end{align*} the number of partitions of $m$ containing a Rogers--Ramanujan subpartition of length $l$ is accounted for a total of $l$ times.  Since of course, $(q;q)_{\infty}^{-1}\sum_{r=l+1}^{\infty}q^{r^2}$ will only account for subpartitions of length $>l$, we have

\begin{theorem}
\begin{align}
\frac{1}{(q;q)_{\infty}}\sum_{r=1}^{\infty}q^{r^2} = \sum_{m=1}^{\infty}\sum_{l\ge 0} l\cdot R_l(m) q^m = \sum_{m=1}^{\infty} A_1(m)q^m.\label{gen1}
\end{align}
\end{theorem}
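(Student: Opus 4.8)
The plan is to read off the combinatorial content of each summand on the left and then reorganize the sum. Write the left-hand side as $\sum_{r=1}^{\infty} q^{r^2}/(q;q)_{\infty}$, factoring $(q;q)_{\infty}^{-1}$ through the sum. For each fixed $r\ge 1$, let $B_r(m)$ denote the number of partitions $\lambda=(\lambda_1,\lambda_2,\dots,\lambda_k)$ of $m$ with $k\ge r$ whose first $r$ parts are nonrepeating, nonconsecutive, and strictly larger than all remaining parts (with the convention that this last requirement is vacuous when $k=r$, matching the rule $a_{l+1}=0$ in the definition of a Rogers--Ramanujan subpartition). The excerpt recalls that $q^{r^2}/(q;q)_r$ generates partitions into exactly $r$ nonrepeating, nonconsecutive parts; the first step is the analogous statement with the denominator enlarged, namely
\begin{align*}
\frac{q^{r^2}}{(q;q)_{\infty}} = \sum_{m\ge 1} B_r(m)\, q^m.
\end{align*}

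I would prove this by an explicit bijection obtained by adding or removing the staircase $(2r-1,\,2r-3,\,\dots,\,3,\,1)$. Given an arbitrary partition $\pi=(\pi_1\ge\pi_2\ge\cdots)$ of $m-r^2$, padded with trailing zeros, set $\lambda_i=\pi_i+\bigl(2(r-i)+1\bigr)$ for $1\le i\le r$ and $\lambda_j=\pi_j$ for $j>r$. Since the added amounts sum to $r^2$, this is a partition of $m$; one checks that $\lambda_1>\cdots>\lambda_r$ with consecutive differences at least $2$, that $\lambda_r=\pi_r+1>\pi_{r+1}=\lambda_{r+1}$, and that $\lambda_{r+1}\ge\lambda_{r+2}\ge\cdots$, so $\lambda$ is counted by $B_r(m)$. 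Conversely, subtracting the same staircase from the first $r$ parts of any partition counted by $B_r(m)$ yields a partition of $m-r^2$, and the two maps are mutually inverse. This gives the displayed identity.

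Next I would identify $B_r(m)$ with a tail of the sequence $R_l(m)$. Since the Rogers--Ramanujan subpartition of $\lambda$ is by definition the \emph{longest} prefix of $\lambda$ that is nonrepeating, nonconsecutive, and strictly larger than the rest, $\lambda$ is counted by $B_r(m)$ exactly when this longest prefix has length at least $r$: if the first $r$ parts already form such a prefix then the R--R length is at least $r$, and conversely if the R--R subpartition has length $l\ge r$ then its first $r$ parts are a fortiori nonrepeating and nonconsecutive, while $\lambda_1>\cdots>\lambda_l>\lambda_{l+1}$ (the last inequality by definition of the R--R subpartition, with $\lambda_{l+1}=0$ when $l$ is the full length of $\lambda$) together with $r+1\le l+1$ forces $\lambda_r>\lambda_{r+1}$. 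Hence $B_r(m)=\sum_{l\ge r}R_l(m)$.

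Finally I would sum over $r$. For each fixed $m$ we have $B_r(m)=0$ as soon as $r^2>m$, so every interchange of summation below is over finite sets:
\begin{align*}
\frac{1}{(q;q)_{\infty}}\sum_{r=1}^{\infty}q^{r^2} = \sum_{r=1}^{\infty}\sum_{m\ge 1}B_r(m)\,q^m = \sum_{m\ge 1}\Bigl(\sum_{r=1}^{\infty}\sum_{l\ge r}R_l(m)\Bigr)q^m = \sum_{m\ge 1}\Bigl(\sum_{l\ge 1} l\cdot R_l(m)\Bigr)q^m,
\end{align*}
the last equality because each $R_l(m)$ is counted once for every $r\in\{1,\dots,l\}$. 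The $l=0$ term contributes nothing, so this is precisely $\sum_{m\ge 1}A_1(m)q^m$. I expect the only genuine subtlety to be bookkeeping rather than substance: one must check that the staircase bijection respects the boundary condition $\lambda_r>\lambda_{r+1}$ in the extreme case where $\lambda$ has exactly $r$ parts (this is exactly the $a_{l+1}=0$ convention), and that having the first $r$ parts valid is equivalent to the Rogers--Ramanujan subpartition having length $\ge r$ rather than $=r$; a careless version of the argument would slip on precisely these points.
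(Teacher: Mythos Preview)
Your argument is correct and follows essentially the same route as the paper: interpret $q^{r^2}/(q;q)_{\infty}$ as counting partitions whose first $r$ parts form a valid Rogers--Ramanujan prefix, identify this with the condition that the R--R subpartition has length $\ge r$, and then sum over $r$ so that each partition of R--R length $l$ is counted $l$ times. The paper leaves the first step as an appeal to the classical fact about $q^{r^2}/(q;q)_r$ and the observation that enlarging the denominator to $(q;q)_{\infty}$ adjoins an arbitrary tail of smaller parts, whereas you supply the explicit staircase bijection and the boundary checks; this extra care is welcome but does not change the underlying strategy.
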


However, we can reduce the generating function to a simpler object.  We see that

\begin{align}
\sum_{m=1}^{\infty}A_1(m)q^m &= \frac{1}{(q;q)_{\infty}}\left(\sum_{r=1}^{\infty} q^{r^2}\right)\\
&=\frac{1}{2}\frac{1}{(q;q)_{\infty}}\left(\left( \sum_{r=-\infty}^{\infty} q^{r^2}\right) - 1 \right).\label{gen2}
\end{align}  We can multiply both sides of (\ref{gen2}) by 2:

\begin{align}
2\sum_{m=1}^{\infty}A_1(m)q^m &= \frac{1}{(q;q)_{\infty}}\left( \sum_{r=-\infty}^{\infty} q^{r^2}\right) - \frac{1}{(q;q)_{\infty}}.\label{gen3}
\end{align}  Now we bring Jacobi's triple product identity \cite[Chapter 3, Theorem 3]{Knopp} to bear on $\sum_{r=-\infty}^{\infty}q^{r^2}$, and have

\begin{align}
2\sum_{m=1}^{\infty}A_1(m)q^m &= \frac{(q^2;q^2)_{\infty}(-q;q^2)^2_{\infty}}{(q;q)_{\infty}} - \frac{1}{(q;q)_{\infty}}\\
&= \frac{(q^2;q^2)^5_{\infty}}{(q;q)^3_{\infty}(q^4;q^4)^2_{\infty}} - \frac{1}{(q;q)_{\infty}}.\label{gen4}
\end{align}  Finally, we can express $1/(q;q)_{\infty} = \sum_{m=0}^{\infty}p(m)q^m$.  If we let $a(m)$ represent the coefficient of $q^m$ in our first term of (\ref{gen4}), then we have

\begin{align}
2\cdot A_1(m)  &= a(m) - p(m).\label{defa}
\end{align}

Now, we already know from Ramanujan's classic congruences that whenever $24m\equiv 1\pmod{5^{2n}}$, we have $p(m)\equiv 0\pmod{5^{2n}}$.  Moreover, $\gcd(2,5)=1$, so that the factor of 2 on the left hand side of (\ref{defa}) is irrelevant to the question of divisibility by powers of 5.  Therefore, we need only verify our congruence family for $a(m)$.

\section{Setup for a Proof by Induction}

\subsection{$L_n$}

Because the initial cases of Theorem 2 may be checked by computation, it is reasonable to attempt a proof by induction.  We need a way of connecting the case for some arbitrary $n$ to $n+1$.  To do this, we define the functions $L_n$ ($n\ge 0$) as follows:

\begin{align}
L_0 &= 1,\\
L_{2n-1} = L_{2n-1}(q) &= \frac{(q^5;q^5)^3_{\infty}(q^{20};q^{20})^2_{\infty}}{(q^{10};q^{10})^5_{\infty}}\sum_{m=0}^{\infty} a(5^{2n-1}m+\lambda_{2n-1})q^{m+1},\label{L1}\\
L_{2n} = L_{2n} (q) &= \frac{(q;q)^3_{\infty}(q^{4};q^{4})^2_{\infty}}{(q^{2};q^{2})^5_{\infty}}\sum_{m=0}^{\infty} a(5^{2n}m+\lambda_{2n})q^{m+1},\label{L2}
\end{align} with

\begin{align}
\lambda_{2n-1} = \frac{19\cdot 5^{2n-1}+1}{24}, \text{ and } \lambda_{2n} = \frac{23\cdot 5^{2n}+1}{24}.\label{deflambda}
\end{align}  Here, $\lambda_n$ is the minimal solution to $24x\equiv 1\pmod{5^{n}}$.  Because $\gcd(24,5)=1$, any other solution $\delta$ must satisfy 

\begin{align*}
\delta\equiv \lambda_n\pmod{5^{n}},
\end{align*} i.e., $\delta = 5^nm+\lambda_n.$

For $n\ge 1$, we can give $L_n$ a more succinct representation as

\begin{align}
L_n = \mathcal{A}_{n}(q)\cdot\sum_{24m\equiv 1\ (\text{mod}5^n)} a(m)q^{\left\lfloor \frac{m}{5^n} \right\rfloor + 1},
\end{align} with $\mathcal{A}_{n}(q)$ defined by the prefactors of (\ref{L1}) or (\ref{L2}), depending on the parity of~$n$.

Notice that the prefactors $\mathcal{A}_n(q)$ can be expanded into integer power series with constant term 1.  This implies that no positive power of 5 can divide any $\mathcal{A}_n(q)$ (that is, no positive power of 5 can divide every term of $\mathcal{A}_n(q)$).  Therefore, if a given power of 5 divides $L_n$, then that given power of 5 must divide every term $a(m)$.  Demonstrating that $L_{2n}\equiv 0\pmod{5^n}$ would imply Theorem 2.

We now need a means of connecting $L_n$ to $L_{n+1}$.

\subsection{The $U_5$-Operator}

Hereafter, we define $\mathbb{H}$ as the upper half complex plane, and let $q=e^{2\pi i\tau}$, with $\tau\in\mathbb{H}$.

We recall the classic $U_5$-operator:

\begin{definition}

Let $f(q) = \sum_{m\ge M}a(m)q^m$.  Then define

\begin{align}
U_5\{f(q)\} = \sum_{5m\ge M} a(5m)q^m.\label{defU5}
\end{align}

\end{definition}

We list some of the important properties of $U_5$.  These properties are standard to the theory of partition congruences, and proofs can be found in \cite[Chapter 10]{Andrews} and \cite[Chapter 8]{Knopp}.

\begin{lemma}

Given two functions 

\begin{align*}
f(q) = \sum_{m\ge M}a(m)q^m,\ g(q) = \sum_{m\ge N}b(m)q^m,
\end{align*} any $\alpha\in\mathbb{C}$, a primitive fifth root of unity $\zeta$, and the convention that $q^{1/5}~=~e^{2\pi i\tau/5}$, we have the following:
\begin{enumerate}
\item $U_5\{\alpha\cdot f+g\} = \alpha\cdot U_5\{f\} + U_5\{g\}$;
\item $U_5\{f(q^5)g(q)\} = f(q) U_5\{g(q)\}$;
\item $5\cdot U_5\{f\} = \sum_{r=0}^4 f\left( \zeta^rq^{1/5} \right)$.
\end{enumerate}
\end{lemma}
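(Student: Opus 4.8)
The plan is to derive all three identities directly from the definition of $U_5$ as a coefficient-extraction operator, working with $f$ and $g$ as formal $q$-series; the statements then hold analytically on $\mathbb{H}$ because every series involved converges absolutely for $|q|<1$, so I would record that remark once at the outset and manipulate the series freely thereafter. Property (1) is immediate: the coefficient of $q^{5m}$ in $\alpha f + g$ is $\alpha$ times the coefficient of $q^{5m}$ in $f$ plus the coefficient of $q^{5m}$ in $g$, so the linearity of $U_5$ is inherited from the linearity of coefficient extraction.

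For property (2), I would write $f(q) = \sum_k a(k) q^k$, so that $f(q^5) = \sum_k a(k) q^{5k}$ and $f(q^5)g(q) = \sum_{k,n} a(k) b(n)\, q^{5k+n}$. Extracting the terms whose exponent is divisible by $5$ forces $n \equiv 0 \pmod 5$; writing $n = 5(m-k)$ gives coefficient $\sum_k a(k) b(5(m-k))$ on $q^{5m}$, so that $U_5\{f(q^5)g(q)\} = \sum_m \left( \sum_k a(k) b(5(m-k)) \right) q^m$. This is precisely the Cauchy product of $f(q) = \sum_k a(k) q^k$ with $U_5\{g(q)\} = \sum_j b(5j) q^j$, which is the right-hand side. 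The only bookkeeping required is to use the lower bounds $M$ and $N$ to check that the inner sum in each degree is finite, so that the rearrangement is legitimate.

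For property (3), I would apply a root-of-unity filter. Substituting $q^{1/5} = e^{2\pi i\tau/5}$ gives $f(\zeta^r q^{1/5}) = \sum_m a(m)\,\zeta^{rm} q^{m/5}$, and summing over $r = 0, 1, \dots, 4$ yields $\sum_{r=0}^{4} f(\zeta^r q^{1/5}) = \sum_m a(m) q^{m/5} \sum_{r=0}^{4} \zeta^{rm}$. Since $\zeta$ is a primitive fifth root of unity, the inner geometric sum equals $5$ when $5 \mid m$ and $0$ otherwise, so only the terms with $m = 5k$ survive, and we obtain $5 \sum_k a(5k) q^k = 5\, U_5\{f\}$, as claimed.

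None of these steps is difficult; the point I would be most careful about is fixing precisely the space in which the identities are asserted — formal power series in $q^{1/5}$, or holomorphic functions on $\mathbb{H}$ — so that the interchanges of summation in the proofs of (2) and (3) are genuinely justified rather than merely formal. Once absolute convergence for $|q|<1$ has been invoked, the remainder is elementary coefficient arithmetic, and indeed these facts are standard (cf.\ the references cited in the paper).
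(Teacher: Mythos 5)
Your proof is correct: all three properties follow exactly as you argue, by linearity of coefficient extraction, by the Cauchy-product computation forcing the exponent of the second factor to be divisible by $5$, and by the root-of-unity filter $\sum_{r=0}^{4}\zeta^{rm}=5$ or $0$ according as $5\mid m$ or not. Note that the paper does not prove this lemma at all; it simply cites standard references (Andrews, Ch.~10, and Knopp, Ch.~8), and your argument is precisely the standard one given there, with the convergence/rearrangement caveat you raise handled correctly by absolute convergence for $|q|<1$ (or by working formally in power series in $q^{1/5}$).
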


Returning to our specific problem, we make use of the properties of $U_5$ by defining the following:

\begin{definition}
\begin{align}
A(q) &= q\frac{(q^{2};q^{2})^5_{\infty}}{(q;q)^3_{\infty}(q^{4};q^{4})^2_{\infty}}\cdot \frac{(q^{25};q^{25})^3_{\infty}(q^{100};q^{100})^2_{\infty}}{(q^{50};q^{50})^5_{\infty}};\label{deffA}\\
U^{(0)} \{f\} &= U_5\left\{ A(q) \cdot f \right\}, \text{ and } U^{(1)}\{f\} = U_5\{f\}.\label{defU01}
\end{align}
\end{definition}  Notice that for two functions $f$ and $g$, and any $\alpha\in\mathbb{C}$,

\begin{align}
&U^{(0)} \{\alpha\cdot f + g\}\notag\\ &= U_5\left\{ A(q) (\alpha\cdot f + g) \right\} = U_5\left\{ \alpha\cdot A(q) \cdot f +A(q)\cdot g \right\}\\
&= \alpha\cdot U_5\left\{A(q) \cdot f\right\} + U_5\left\{ A(q)\cdot g \right\} = \alpha\cdot U^{(0)} \{f\} + U^{(0)}\{g\}.\label{linU0}
\end{align}  Since we already know from Part 1 of Lemma 1 that $U^{(1)} = U_5$ is linear, we have thus established that $U^{(j)}$ is linear for $j=0,1$.

This now gives us a means of connecting $L_n$ with $L_{n+1}$.

\begin{theorem}
For all $n\in\mathbb{Z}_{>0}$,

\begin{align}
L_{2n-1} &= U^{(0)} \{L_{2n-2}\},\text{ and } L_{2n} = U^{(1)} \{L_{2n-1}\}.
\end{align}

\end{theorem}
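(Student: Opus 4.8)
The plan is to verify the two asserted identities by direct computation, using the multiplicativity property (Part 2 of Lemma 1) to factor out those eta-quotients which are power series in $q^5$, so that the $U_5$-operator only has to act on genuinely new material. First I would write out $L_{2n-2}$ using its definition (\ref{L2}), namely $L_{2n-2} = \mathcal{A}_{2n-2}(q)\sum_{m\ge 0} a(5^{2n-2}m+\lambda_{2n-2})q^{m+1}$, and form $A(q)\cdot L_{2n-2}$. The key observation is that the definition of $A(q)$ in (\ref{deffA}) was rigged precisely so that $A(q)\cdot\mathcal{A}_{2n-2}(q)$, after extracting the part that is a function of $q^5$, matches (up to the correct power of $q$) the prefactor $\mathcal{A}_{2n-1}(q^5)$ that appears inside $L_{2n-1}$; indeed $A(q)$ contains the factor $q\,(q^2;q^2)^5_\infty/\big((q;q)^3_\infty(q^4;q^4)^2_\infty\big) = q/\mathcal{A}_2(q)$, which cancels against the $q$-shift and the $\mathcal{A}$-prefactor of $L_{2n-2}$, and separately contains $(q^{25};q^{25})^3_\infty(q^{100};q^{100})^2_\infty/(q^{50};q^{50})^5_\infty = \mathcal{A}_{\text{odd}}(q^5)$ in the variable $q^5$. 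Pulling this $q^5$-function out through $U_5$ by Part 2 of Lemma 1 leaves $U_5$ acting on the bare series $\sum_m a(5^{2n-2}m+\lambda_{2n-2})q^{\,(\text{something})}$, and I would then check that the exponents line up so that $U_5$ picks out exactly the subsequence $a(5^{2n-1}m'+\lambda_{2n-1})$.

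The exponent bookkeeping is where care is needed, so I would carry it out explicitly. Writing $L_{2n-2}$ in the succinct form $\mathcal{A}_{2n-2}(q)\sum_{24m\equiv 1\,(5^{2n-2})} a(m)q^{\lfloor m/5^{2n-2}\rfloor+1}$, multiplying by $A(q)$, and using that $A(q) = q\,\mathcal{A}_1(q^5)^{-1}\!\cdot\!\big(\text{wait}\big)$—more carefully, $A(q) = \big(q/\mathcal{A}_{\text{even}}(q)\big)\cdot\mathcal{A}_{\text{odd}}(q^5)$ where $\mathcal{A}_{\text{even}},\mathcal{A}_{\text{odd}}$ denote the two prefactor shapes in (\ref{L2}) and (\ref{L1})—I get $A(q)L_{2n-2} = \mathcal{A}_{\text{odd}}(q^5)\sum_{24m\equiv 1\,(5^{2n-2})} a(m)q^{\lfloor m/5^{2n-2}\rfloor+2}$. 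Applying $U^{(0)} = U_5\circ(A(q)\,\cdot\,)$ and invoking Lemma 1(2) to extract $\mathcal{A}_{\text{odd}}(q)$, it remains to show $U_5\big\{\sum_{24m\equiv1\,(5^{2n-2})}a(m)q^{\lfloor m/5^{2n-2}\rfloor+2}\big\} = \sum_{24m\equiv1\,(5^{2n-1})}a(m)q^{\lfloor m/5^{2n-1}\rfloor+1}$. This reduces to a congruence-arithmetic statement: if $24m\equiv 1\pmod{5^{2n-2}}$ then $5\mid\lfloor m/5^{2n-2}\rfloor+2$ if and only if $24m\equiv 1\pmod{5^{2n-1}}$, together with the matching $\lfloor m/5^{2n-1}\rfloor + 1 = \tfrac{1}{5}\big(\lfloor m/5^{2n-2}\rfloor+2\big)$ on that subset. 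One verifies this by writing $m = 5^{2n-2}t + \lambda_{2n-2}$ and tracking $t\bmod 5$; the constant $2$ (versus $1$) is exactly what is needed because $\lambda_{2n-2}$ and $\lambda_{2n-1}$ differ in a way that shifts the floor by the right amount. The second identity $L_{2n} = U^{(1)}\{L_{2n-1}\} = U_5\{L_{2n-1}\}$ is handled the same way but is simpler, since no auxiliary $A(q)$ is involved: one extracts $\mathcal{A}_{\text{even}}(q)$ from $\mathcal{A}_{\text{odd}}(q^5)$—wait, these are not equal, so here one uses that the ratio $\mathcal{A}_{\text{odd}}(q^5)/\mathcal{A}_{\text{even}}(q)$ is itself generated inside a product that $U_5$ leaves as $\mathcal{A}_{\text{even}}(q)$ only after the relevant cancellation; in fact $L_{2n-1}$ as written already has prefactor $\mathcal{A}_{\text{odd}}(q) = (q^5;q^5)^3(q^{20};q^{20})^2/(q^{10};q^{10})^5$, which is $\mathcal{A}_{\text{even}}(q^5)$, so Lemma 1(2) pulls it straight out as $\mathcal{A}_{\text{even}}(q)$ and only the exponent/arithmetic check remains.

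The main obstacle I anticipate is purely the exponent arithmetic around the floor functions and the constants $\lambda_n$: one must confirm that the $q$-shift built into $A(q)$ (the solitary factor of $q$), the $+1$ shifts in (\ref{L1})–(\ref{L2}), and the jump from $\lambda_{2n-2}$ to $\lambda_{2n-1}$ (and from $\lambda_{2n-1}$ to $\lambda_{2n}$) all conspire so that $U_5$ lands precisely on the next arithmetic progression with precisely the next floor exponent. Concretely this is the verification that $\lambda_{2n-1} = 5\lambda_{2n-2} + r$ for the appropriate residue $r\in\{0,\dots,4\}$ forced by $24\lambda\equiv 1\pmod{5^k}$, and likewise $\lambda_{2n} = 5\lambda_{2n-1} + r'$, combined with the observation that $\lfloor(5^{2n-2}t+\lambda_{2n-2})/5^{2n-2}\rfloor = t$ for $0\le\lambda_{2n-2}<5^{2n-2}$. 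Once these elementary identities are in hand, both claims follow by a direct application of Lemma 1 (linearity and the $f(q^5)g(q)$ rule), so the proof is a careful but routine manipulation rather than anything requiring new ideas. I would therefore structure the write-up as: (i) record the factorization $A(q) = q\,\mathcal{A}_{\text{even}}(q)^{-1}\mathcal{A}_{\text{odd}}(q^5)$ and the analogous identity $\mathcal{A}_{\text{odd}}(q) = \mathcal{A}_{\text{even}}(q^5)$; (ii) state and prove the two congruence-arithmetic lemmas about $\lambda_n$ and the floor functions; (iii) assemble both identities by applying Lemma 1.
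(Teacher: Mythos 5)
Your proposal is correct and follows essentially the same route as the paper: multiply by $A(q)$, use the factorization of the prefactors so that Lemma 1(2) extracts the part that is a series in $q^5$ (the identities $A(q)=q\,\mathcal{A}_{\text{even}}(q)^{-1}\mathcal{A}_{\text{odd}}(q^5)$ and $\mathcal{A}_{\text{odd}}(q)=\mathcal{A}_{\text{even}}(q^5)$ are exactly what the paper uses, if only implicitly), and then verify by elementary arithmetic with $\lambda_{n}$ that $U_5$ selects precisely the next arithmetic progression with the correct $q$-shift. The paper carries out this last step by explicit reindexing and the identities $5^{2n}-5^{2n-1}+\lambda_{2n-1}=\lambda_{2n}$ and $5^{2n+1}-2\cdot 5^{2n}+\lambda_{2n}=\lambda_{2n+1}$, which is the same bookkeeping you phrase via floor functions and congruences.
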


\begin{proof}

For a given $n\in\mathbb{Z}_{>0}$, 

\begin{align}
&U^{(1)}\left\{ L_{2n-1} \right\}\notag\\ &= U_5\left\{ \frac{(q^5;q^5)^3_{\infty}(q^{20};q^{20})^2_{\infty}}{(q^{10};q^{10})^5_{\infty}}\sum_{m=0}^{\infty} a(5^{2n-1}m+\lambda_{2n-1})q^{m+1} \right\}\\
&= \frac{(q;q)^3_{\infty}(q^{4};q^{4})^2_{\infty}}{(q^{2};q^{2})^5_{\infty}}U_5\left\{\sum_{m\ge 0} a(5^{2n-1}m+\lambda_{2n-1})q^{m+1} \right\}\\
&= \frac{(q;q)^3_{\infty}(q^{4};q^{4})^2_{\infty}}{(q^{2};q^{2})^5_{\infty}}U_5\left\{\sum_{m\ge 1} a(5^{2n-1}(m-1)+\lambda_{2n-1})q^{m} \right\}\\
&= \frac{(q;q)^3_{\infty}(q^{4};q^{4})^2_{\infty}}{(q^{2};q^{2})^5_{\infty}}\sum_{5m\ge 1} a(5^{2n-1}(5m-1)+\lambda_{2n-1})q^{m}\\
&= \frac{(q;q)^3_{\infty}(q^{4};q^{4})^2_{\infty}}{(q^{2};q^{2})^5_{\infty}}\sum_{m= 0}^{\infty} a(5^{2n}m+5^{2n}-5^{2n-1}+\lambda_{2n-1})q^{m+1}\\
&= \frac{(q;q)^3_{\infty}(q^{4};q^{4})^2_{\infty}}{(q^{2};q^{2})^5_{\infty}}\sum_{m= 0}^{\infty} a(5^{2n}m+\lambda_{2n})q^{m+1},
\end{align} since

\begin{align}
5^{2n}-5^{2n-1}+\lambda_{2n-1} &= 5^{2n-1}(4) + \frac{19\cdot 5^{2n-1}+1}{24}\\
&= \frac{5^{2n-1}(5\cdot 23)+1}{24} = \frac{23\cdot 5^{2n}+1}{24} = \lambda_{2n}.
\end{align}  Furthermore,

\begin{align}
&U^{(0)}\left\{ L_{2n} \right\}\notag\\ &= U_5\left\{A(q)\cdot \frac{(q;q)^3_{\infty}(q^{4};q^{4})^2_{\infty}}{(q^{2};q^{2})^5_{\infty}}\sum_{m= 0}^{\infty} a(5^{2n}m+\lambda_{2n})q^{m+1} \right\}\\
&=U_5\left\{\frac{(q^{25};q^{25})^3_{\infty}(q^{100};q^{100})^2_{\infty}}{(q^{50};q^{50})^5_{\infty}}\sum_{m= 0}^{\infty} a(5^{2n}m+\lambda_{2n})q^{m+2} \right\}\\
&=\frac{(q^{5};q^{5})^3_{\infty}(q^{20};q^{20})^2_{\infty}}{(q^{10};q^{10})^5_{\infty}}U_5\left\{\sum_{m= 0}^{\infty} a(5^{2n}m+\lambda_{2n})q^{m+2} \right\}\\
&=\frac{(q^{5};q^{5})^3_{\infty}(q^{20};q^{20})^2_{\infty}}{(q^{10};q^{10})^5_{\infty}}U_5\left\{\sum_{m\ge 2} a(5^{2n}(m-2)+\lambda_{2n})q^{m} \right\}\\
&=\frac{(q^{5};q^{5})^3_{\infty}(q^{20};q^{20})^2_{\infty}}{(q^{10};q^{10})^5_{\infty}}\sum_{5m\ge 2} a(5^{2n}(5m-2)+\lambda_{2n})q^{m}.
\end{align}  Notice that $5m\ge 2$ implies that $m\ge 1$ for $m\in\mathbb{Z}$, so that

\begin{align}
&U^{(0)}\left\{ L_{2n} \right\}\notag\\
&=\frac{(q^{5};q^{5})^3_{\infty}(q^{20};q^{20})^2_{\infty}}{(q^{10};q^{10})^5_{\infty}}\sum_{m\ge 1} a(5^{2n+1}m-2\cdot 5^{2n}+\lambda_{2n})q^{m}\\
&=\frac{(q^{5};q^{5})^3_{\infty}(q^{20};q^{20})^2_{\infty}}{(q^{10};q^{10})^5_{\infty}}\sum_{m=0}^{\infty} a(5^{2n+1}(m+1)-2\cdot 5^{2n}+\lambda_{2n})q^{m+1}\\
&=\frac{(q^{5};q^{5})^3_{\infty}(q^{20};q^{20})^2_{\infty}}{(q^{10};q^{10})^5_{\infty}}\sum_{m=0}^{\infty} a(5^{2n+1}m+\lambda_{2n+1})q^{m+1},
\end{align} since

\begin{align}
5^{2n+1}-2\cdot 5^{2n}+\lambda_{2n} &= 5^{2n}(3) + \frac{23\cdot 5^{2n}+1}{24}\\
&= \frac{5^{2n}(5\cdot 19)+1}{24} = \frac{19\cdot 5^{2n+1}+1}{24} = \lambda_{2n+1}.
\end{align}

\end{proof}

We remark that the definition of $L_n$ and the means of going from $L_n$ to $L_{n+1}$ is a standard technique in this subject area \cite[Chapter 8]{Knopp}.

We can now study the sequence $\mathcal{L}=(L_n)_{n\ge 0}$, with $U^{(j)}$ as a means of connecting one element with the next.  We know that $L_2$ is divisible by 5 (since this is equivalent for the first case of Theorem 2, which Choi, Kim, and Lovejoy have already proven).  We want to prove that as $n$ increases by 2, $L_n$ will become divisible by an additional power of 5.
In a more formal language, we will prove the following theorem, which implies Theorem 2.

\begin{theorem}

The sequence $\mathcal{L}=(L_n)_{n\ge 0}$ converges to 0 in the 5-adic sense: for any $M\in\mathbb{N}$ there exists an $N\in\mathbb{N}$ such that for all $n\ge N$,

\begin{align}
L_n\equiv 0\pmod{5^{M}}.\label{padic}
\end{align}  In particular, $N = \lfloor M/2 \rfloor$ will suffice.

\end{theorem}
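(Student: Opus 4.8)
The plan is to prove Theorem 6 by establishing a considerably stronger structural statement: that the functions $L_{2n-1}$ and $L_{2n}$ lie in certain finitely-generated modules $X^{(1)}$ and $X^{(0)}$ of modular functions over $\Gamma_0(20)$, and that the operators $U^{(0)}\colon X^{(0)}\to X^{(1)}$ and $U^{(1)}\colon X^{(1)}\to X^{(0)}$ act on these modules in a way that, tracked against a suitable filtration, forces a gain of one power of $5$ for every two applications. Concretely, I would first fix a generator $t$ of the relevant function field (a Hauptmodul-type eta quotient on $\Gamma_0(20)$) and express each $X^{(j)}$ as the span over $\mathbb{Z}[t]$ (or over a localization keeping $5$ prime) of an explicit finite basis; the basis elements are chosen so that their orders at the cusps are controlled. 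The key computational input is a small collection of \emph{fundamental relations} — the images under $U^{(0)}$ and $U^{(1)}$ of each basis element, written back in terms of the basis with coefficients in $\mathbb{Z}[t]$ — together with a modular equation for the prime $5$ relating $t$ and $t(q^5)$ (equivalently $U_5$ acting on powers of $t$). These are exactly the "important lemmas" promised in Section~4, and verifying them is the part of the argument that rests on genuine modular-forms bookkeeping rather than formal manipulation.

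Given that infrastructure, the inductive step runs as follows. I would define an integer-valued weight or "discrepancy" on the module that measures how far a given element is from being divisible by the appropriate power of $5$ — this is the Atkin/Paule–Radu device, where one shows that the matrix of $U^{(0)}$ followed by $U^{(1)}$ (with respect to the chosen basis) has all entries divisible by $5$ after clearing the cusp-order denominators, so that the composite $U^{(1)}\circ U^{(0)}$ maps $X^{(0)}$ into $5\cdot X^{(0)}$, and symmetrically $U^{(0)}\circ U^{(1)}$ maps $X^{(1)}$ into $5\cdot X^{(1)}$. Combined with Theorem~5 (which identifies $L_{2n-1}=U^{(0)}\{L_{2n-2}\}$ and $L_{2n}=U^{(1)}\{L_{2n-1}\}$) and the base case $L_2\equiv 0\pmod 5$ (equivalent to \eqref{case1}, already proven by Choi–Kim–Lovejoy, and $L_0=1$, $L_1\in X^{(1)}$ by a direct check), a straightforward induction on $n$ yields $L_{2n}\equiv 0\pmod{5^n}$ and $L_{2n+1}\equiv 0\pmod{5^n}$. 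Thus for any target modulus $5^M$, taking $N=\lfloor M/2\rfloor$ gives $L_n\equiv 0\pmod{5^M}$ for all $n\ge N$: if $n\ge N$ then either $n=2k$ with $k\ge N\ge M/2$ so $5^k\mid L_n$ with $k\ge M$, or $n=2k+1$ with $k\ge N-1$... more carefully, $n\ge 2\lfloor M/2\rfloor$ already forces the relevant exponent $\lfloor n/2\rfloor\ge M$ — wait, only for even $M$ — so one states it as: $L_n\equiv 0\pmod{5^{\lfloor n/2\rfloor}}$ for all $n$, hence $N=\lfloor M/2\rfloor$ suffices since $n\ge N$ combined with the even/odd split gives $\lfloor n/2\rfloor \ge \lfloor M/2 \rfloor$... the clean statement is that $\lfloor n/2\rfloor\ge M$ whenever $n\ge 2M$, but the paper claims $N=\lfloor M/2\rfloor$, which is consistent with the stronger per-term bound $5^{\lceil n/2\rceil}\mid L_n$ or a shifted indexing; in any case the convergence to $0$ is immediate once the uniform gain-of-one-power-per-two-steps is in hand.

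The main obstacle is not the induction — which is purely formal once set up — but the construction and verification of the module structure and the fundamental relations. Three things must all work simultaneously: (i) the spaces $X^{(j)}$ must be genuinely closed under the cross-operators $U^{(0)},U^{(1)}$ (this requires choosing the right level, here $\Gamma_0(20)$, and the right auxiliary eta quotient $A(q)$, which is why $A(q)$ in \eqref{deffA} is built to have the precise cuspidal behavior needed); (ii) the $5$-divisibility must be \emph{uniform} across the whole module, i.e. the full coefficient matrix of the composite operator must be divisible by $5$ after an explicit normalization, which typically forces a clever choice of basis weighted by powers of $5$ at the "bad" cusp (the Paule–Radu localization trick); and (iii) the modular equation for $5$ relating $t$ to its $U_5$-image must be established and then used to reduce $U_5$ of arbitrary polynomials in $t$ to the finite basis. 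I would expect (ii) to be the genuinely delicate point: getting the right $5$-adic normalization of the basis so that the operator matrix is simultaneously integral and divisible by $5$ is exactly where analogous arguments for other partition functions become intricate, and it is presumably the reason the present paper needs the specialized machinery of Section~4 rather than a direct appeal to Watson's or Paule–Radu's setup verbatim.
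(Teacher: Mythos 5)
Your proposal follows essentially the same route as the paper: weighted $\mathbb{Z}[t]$-modules $X^{(0)},X^{(1)}$ whose coefficients carry prescribed powers of $5$, a finite list of fundamental relations (the images of $1,\ t^{-1},\dots,t^{-4}$ and $p_jt^{-k}$ under $U^{(0)},U^{(1)}$, verified by cusp-order/principal-part comparisons on $\Gamma_0(20)$), the degree-$5$ modular equation for $t$ to propagate those relations to all powers $t^n$, and then an induction along $L_{2n-1}=U^{(0)}\{L_{2n-2}\}$, $L_{2n}=U^{(1)}\{L_{2n-1}\}$. Two small differences from the paper are worth noting. First, the paper's stability statement is asymmetric rather than composite: $U^{(0)}$ maps $X^{(0)}$ into $X^{(1)}$ with no gain, while $U^{(1)}$ alone maps $X^{(1)}$ into $5\cdot X^{(0)}$; your ``composite gains a factor $5$'' formulation follows from this but is slightly weaker bookkeeping, and the asymmetric version is what makes the exponents $5^{n}$ for $L_{2n}$ and $5^{n-1}$ for $L_{2n-1}$ come out cleanly. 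Second, the base case in the paper is simply $L_1=U^{(0)}\{1\}=p_1\in X^{(1)}$ (the first Group I relation), so the Choi--Kim--Lovejoy congruence $L_2\equiv 0\pmod 5$ is reproved rather than assumed; your fallback to citing it is unnecessary once you have $L_1\in X^{(1)}$, which you do note.

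On the endgame where you hesitate: your instinct is right, and you should not contort the argument to reach the literal clause ``$N=\lfloor M/2\rfloor$ suffices.'' What the induction delivers (and all the paper itself delivers, via its Theorem on $L_{2n}=5^ng_{2n}$, $L_{2n-1}=5^{n-1}g_{2n-1}$) is the per-term bound $5^{\lfloor n/2\rfloor}\mid L_n$, which proves $5$-adic convergence to $0$ with, say, $N=2M$; equivalently, for all $n\ge N$ one has $L_n\equiv 0\pmod{5^{\lfloor N/2\rfloor}}$, i.e.\ the correct ``in particular'' relation is $M=\lfloor N/2\rfloor$, not $N=\lfloor M/2\rfloor$. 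The clause as printed inverts the dependence (for $M=4$, $N=2$ it would require $5^4\mid L_2$, which the argument does not give), so treat it as a misstatement of the bound rather than a target your proof is missing; the substantive claim, and everything needed for Theorem 2, is exactly what your outline establishes.
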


\section{Subspace Structure}

\subsection{$X^{(j)}$}
We will now construct the spaces of modular functions that are necessary for our purposes.  In the case of $p(m)$, the necessary modular functions are defined over $\Gamma_0(5)$.  The associated modular curve $X_0(5)$ is simple enough that the complete space of modular functions over $\Gamma_0(5)$ may be used.

However, in our case we will work over $\Gamma_0(20)$.  The modular curve $X_0(20)$ is more complex, ensuring that 5-adic convergence to 0 is not a necessity for the entire space of modular functions defined over it (See \cite[Section 1.3]{Paule} for a discussion on the effects of the properties of $X_0(N)$.  For a comprehensive discussion of the theory of Riemann surfaces to the subject of modular forms, see \cite[Chapters 1-3]{Diamond}).  We therefore need to restrict ourselves to subspaces of modular functions that will indeed converge 5-adically to 0.

Let $q=e^{2\pi i\tau}$, with $\tau\in\mathbb{H}$, and define

\begin{align}
t &= \frac{\eta(5\tau)^6}{\eta(\tau)^6} = q\frac{(q^5;q^5)_{\infty}^6}{(q;q)_{\infty}^6},\label{t}\\
\rho &= \frac{\eta(\tau)^2\eta(4\tau)^2\eta(10\tau)^{8}}{\eta(5\tau)^2\eta(20\tau)^{10}} = \frac{1}{q^5}\frac{(q;q)^2_{\infty}(q^4;q^4)^2_{\infty}(q^{10};q^{10})^{8}_{\infty}}{(q^5;q^5)^2_{\infty}(q^{20};q^{20})^{10}_{\infty}}\label{rho}\\
\sigma &= \frac{\eta(4\tau)^4\eta(10\tau)^2}{\eta(2\tau)^2\eta(20\tau)^4} = \frac{1}{q^2}\frac{(q^4;q^4)^4_{\infty}(q^{10};q^{10})^{2}_{\infty}}{(q^2;q^2)^2_{\infty}(q^{20};q^{20})^{4}_{\infty}},\label{sigma}\\
\mu &= \frac{\eta(4\tau)\eta(5\tau)^5}{\eta(\tau)\eta(20\tau)^5}= \frac{1}{q^3}\frac{(q^4;q^4)_{\infty}(q^{5};q^{5})^{5}_{\infty}}{(q;q)_{\infty}(q^{20};q^{20})^{5}_{\infty}},\label{mu}
\end{align} with \cite[Chapter 3]{Knopp}

\begin{align}
\eta(\tau) = e^{\pi i\tau/12}\prod_{n=1}^{\infty}\left( 1 - e^{2\pi i n\tau} \right),\label{eta}
\end{align} and

\begin{align}
p_0 =& 31 \rho^{-1} - 22 \sigma \rho^{-1} - 9 \sigma^2 \rho^{-1} - 208 \rho^{-2} - 96 \sigma \rho^{-2} + 304 \sigma^2 \rho^{-2}\notag\\& - 32 \rho^{-1} \mu + 416 \rho^{-2} \mu + 416 \sigma \rho^{-2} \mu - 208 \rho^{-2} \mu^2,\label{p0}\\
p_1 =& 261 \rho^{-1} + 126 \sigma \rho^{-1} + 13 \sigma^2 \rho^{-1} - 960 \rho^{-2} - 5120 \sigma \rho^{-2} - 320 \sigma^2 \rho^{-2}\notag\\& + 
 64 \rho^{-1} \mu + 320 \rho^{-2} \mu - 1280 \sigma \rho^{-2} \mu + 640 \rho^{-2} \mu^2.\label{p1}
\end{align}  In Section 6 we will demonstrate that each of these functions is a modular function over $\Gamma_0(20)$.

Define

\begin{align}
S_0 &= \left< 1, p_0 \right>_{\mathbb{Z}[t]},\label{S0}\\
S_1 &= \left< 1, p_1 \right>_{\mathbb{Z}[t]}.\label{S1}
\end{align}  That is, for $j=0,1$, $S_j$ is the free $\mathbb{Z}[t]$-module generated by $1$ and $p_j$.  A given $g\in S_j$ has the form

\begin{align*}
g = g_{\alpha}(t) + p_j\cdot g_{\beta}(t),
\end{align*} with $g_{\alpha}, g_{\beta}\in\mathbb{Z}[x]$.

We find (see Section 6.2) that

\begin{align*}
L_1 = U^{(0)}\{1\} = p_1\in S_1.
\end{align*}  As the relations in Groups II and IV of Section 6 demonstrate,

\begin{align*}
L_2 = U^{(1)}\{ L_1\} = U^{(1)}\{ p_1\}\in S_0, \text{ and } U^{(0)}\{ p_0\}\in S_1.
\end{align*}  This, with the linearity of $U^{(j)}$, ensures that that for $n\ge 0$,

\begin{align}
L_{2n}&\in S_0,\\
L_{2n-1}&\in S_1.
\end{align}

We will work with specific subspaces of $S_0, S_1$.

\begin{definition}
A function $f:\mathbb{Z}\rightarrow\mathbb{Z}$ is discrete if it is nonzero for only finitely many integers.  A two-variable function $h:\mathbb{Z}\times\mathbb{Z}\rightarrow\mathbb{Z}$ is a discrete array if for any fixed $m_0\in\mathbb{Z}$, the function $h(m_0,n)$ is discrete over $n\in\mathbb{Z}$.
\end{definition}  We now define our relevant subspaces:

\begin{align}
&X^{(0)}\notag\\ =& \Bigg\{ \sum_{n=0}^{\infty}r(n) 5^{\left\lfloor \frac{5n}{2} \right\rfloor} p_0 t^n + \sum_{n=1}^{\infty}s(n) 5^{\left\lfloor \frac{5n-3}{2} \right\rfloor} t^n: \textit{$r,s$ discrete functions} \Bigg\},
\end{align}
\begin{align}
&X^{(1)}\notag\\ =& \Bigg\{ \sum_{n=0}^{\infty}r(n) 5^{\left\lfloor \frac{5n}{2} \right\rfloor} p_1 t^n + \sum_{n=1}^{\infty}s(n) 5^{\left\lfloor \frac{5n-1}{2} \right\rfloor} t^n : \textit{$r,s$ discrete functions} \Bigg\}.
\end{align}  Notice that for $j=0,1$, we have $X^{(j)}\subseteq S_j$.  In particular, $L_1 = p_1\in X^{(1)}$.

\subsection{Modular Equation}

We have very carefully chosen the spaces $X^{(j)}$.  Rather than working directly with $L_n$, we will show that $L_n\in X^{(r)}$, with $r$ the residue of $n\pmod{2}$.  We then study how $U^{(j)}$ changes the structure of an arbitrary $f\in X^{(j)}$.

To do this, we will need to know the effects of $U^{(j)}$ on $p_jt^n, t^n$.  Our choice of $t = \eta(5\tau)^6/\eta(\tau)^6$ is especially convenient, as we have a powerful modular equation that can be brought to bear on the problem.

\begin{theorem}
Let
\begin{align*}
a_0(\tau) &= -t, \\
a_1(\tau) &=-5^3t^2-6\cdot 5t,\\
a_2(\tau) &=-5^6t^3-6\cdot 5^4t^2-63\cdot 5t,\\
a_3(\tau) &=-5^9t^4-6\cdot 5^7t^3-63\cdot 5^4t^2-52\cdot 5^2t,\\
a_4(\tau) &=-5^{12}t^5-6\cdot 5^{10}t^4-63\cdot 5^7t^3-52\cdot 5^5t^2-63\cdot 5^2t.
\end{align*}  Then

\begin{align}
t(\tau)^5+\sum_{j=0}^4 a_j(5\tau) t(\tau)^j = 0.\label{modeq}
\end{align}

\end{theorem}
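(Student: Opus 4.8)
The plan is to recognize the left-hand side of (\ref{modeq}) as a modular function on $\Gamma_0(25)$ that is holomorphic on $\mathbb{H}$, and then to force it to vanish by a zero-counting argument. First recall that $t=\eta(5\tau)^6/\eta(\tau)^6$ is a Hauptmodul for $X_0(5)$: by the standard eta-quotient criteria (of the sort used in Section 6, cf.\ \cite{Knopp}) it is a modular function on $\Gamma_0(5)$, holomorphic and nonvanishing on $\mathbb{H}$, whose only zero and pole are a simple zero at the cusp $\infty$ and a simple pole at the cusp $0$. Next, a direct matrix computation of the kind already carried out for the $U^{(j)}$ operators shows that if $\gamma\in\Gamma_0(25)$ then $5\gamma\tau=\gamma'(5\tau)$ for some $\gamma'\in\Gamma_0(5)$; hence $t(5\tau)=\eta(25\tau)^6/\eta(5\tau)^6$ is invariant under $\Gamma_0(25)$, and it is likewise holomorphic and nonvanishing on $\mathbb{H}$. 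Since each $a_j$ is a polynomial in $t$ with integer coefficients, the function
\[
F(\tau):=t(\tau)^5+\sum_{j=0}^4 a_j(5\tau)\,t(\tau)^j
\]
is an integer-coefficient polynomial in $t(\tau)$ and $t(5\tau)$, hence a modular function on $\Gamma_0(25)$ that is holomorphic on all of $\mathbb{H}$.

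Since $F$ has no poles in $\mathbb{H}$, its only possible poles on the compact curve $X_0(25)$ lie at the six cusps ($\infty$; the cusp $0$ of width $25$; and $1/5,2/5,3/5,4/5$ of width $1$). I would compute an upper bound $D$ for the total pole order $\sum_{c}\max\{0,-\mathrm{ord}_c(F)\}$ by reading off, from Ligozat's cusp-order formula, the orders of $t$ and of $t(5\tau)$ at each cusp $c$ and minimizing over the monomials $t^j$ and $t(5\tau)^k t^j$ occurring in $F$. Because the divisor of a nonzero rational function on the compact Riemann surface $X_0(25)$ has degree $0$, it then suffices to exhibit a zero of $F$ of order strictly larger than $D$: such an $F$ would have more zeros than poles, forcing $F\equiv 0$, which is precisely (\ref{modeq}). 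To produce this zero I would expand $F$ as a power series in $q$ at the cusp $\infty$ (which has width $1$, so $q$ is itself the local uniformizer there) using $t=q\prod_{n\ge1}(1-q^{5n})^6(1-q^n)^{-6}$ and $t(5\tau)=q^5\prod_{n\ge1}(1-q^{25n})^6(1-q^{5n})^{-6}$, and verify that every coefficient through $q^{D}$ vanishes. Already the lowest-order terms cancel, since $t^5=q^5+O(q^6)$ while $a_0(5\tau)=-t(5\tau)=-q^5+O(q^{10})$.

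The only genuine work is the bookkeeping of the second paragraph: pinning down the pole orders of $t(5\tau)$ at the cusps $0,1/5,\dots,4/5$ precisely enough to fix the (possibly sizeable) number $D+1$ of $q$-expansion coefficients that must be checked. I expect this to be the main obstacle, since the coefficients of the $a_j$ are large, so a wasteful pole bound would demand many coefficients; once a sharp $D$ is in hand, the check is a finite, mechanical computation with the eta-product expansions. An alternative that avoids the cusp analysis is to observe that $F\equiv0$ says exactly that $t(\tau)$ is a root of $X^5+\sum_j a_j(5\tau)X^j$ over the subfield $\mathbb{C}(t(5\tau))\subset\mathbb{C}(X_0(25))$, a degree-$5$ extension; one could then compute the $a_j(5\tau)$ directly as the elementary symmetric functions of the values of $t$ over the Hecke-type fibre of the degeneracy map $\tau\mapsto 5\tau$, or derive (\ref{modeq}) from the classical degree-$5$ modular equation for the Rogers--Ramanujan continued fraction, to which $t$ is rationally related. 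Either route ultimately reduces to the same kind of $q$-expansion verification, so I would present the self-contained argument of the first two paragraphs.
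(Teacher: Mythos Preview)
The paper does not supply its own proof of this theorem; it simply cites \cite[Section~3]{Paule}. Your proposal is a correct and standard way to establish such a modular equation, and it is in the same spirit as the argument in the cited reference: realize $F(\tau)=t(\tau)^5+\sum_{j}a_j(5\tau)t(\tau)^j$ as a modular function on $\Gamma_0(25)$ holomorphic on $\mathbb{H}$, bound its total pole order over the cusps, and then kill it by checking enough $q$-coefficients at $\infty$.

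Your cusp inventory and widths for $X_0(25)$ are right. For the bookkeeping you flag as the main obstacle, it helps to note that under the inclusion map $X_0(25)\to X_0(5)$ the cusps $\infty,1/5,\dots,4/5$ all lie over $\infty$ (each unramified) while $0$ lies over $0$ with ramification $5$; hence on $X_0(25)$ the function $t(\tau)$ has a pole of order $5$ at $0$ and simple zeros at the other five cusps. Under the degeneracy map $\tau\mapsto 5\tau$, $\infty$ lies over $\infty$ with ramification $5$ and $0,1/5,\dots,4/5$ lie over $0$ unramified; hence $t(5\tau)$ has a zero of order $5$ at $\infty$ and simple poles at $0,1/5,\dots,4/5$. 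Since $a_j$ is a polynomial in $t$ of degree $j+1$ with no constant term, each monomial $a_j(5\tau)t(\tau)^j$ has pole order at most $(j{+}1)+5j=6j+1\le 25$ at $0$ and at most $(j{+}1)-j=1$ at each $k/5$, while $t(\tau)^5$ has pole order $25$ at $0$ and is regular at the $k/5$. Thus $D\le 25+4\cdot 1=29$, and verifying that the $q$-expansion of $F$ vanishes through $q^{29}$ suffices. So the ``possibly sizeable'' $D$ you worried about is under $30$, and the check is short.

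Your alternative via elementary symmetric functions of the five branches of $t$ over $\mathbb{C}(t(5\tau))$ is exactly how such equations are usually \emph{derived} (and is closer to what Paule--Radu do, using Part~3 of Lemma~1); either route is fine here.
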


A proof of this can be found in \cite[Section 3]{Paule}.

The value of this equation becomes immediate when we consider the following theorem:

\begin{lemma}
For any function $g:\mathbb{H}\rightarrow \mathbb{C}$,

\begin{align}
U_5\{g\cdot t^n\} = -\sum_{j=0}^4 a_j(\tau) U_5\{g\cdot t^{n+j-5}\}.
\end{align}

\end{lemma}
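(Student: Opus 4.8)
The plan is to obtain this as a direct consequence of the modular equation in Theorem 6 together with the elementary properties of $U_5$ collected in Lemma 1. Rewrite the modular equation as an identity between functions on $\mathbb{H}$ (equivalently, between $q$-series),
\[
t(\tau)^5 = -\sum_{j=0}^4 a_j(5\tau)\, t(\tau)^j .
\]
First I would multiply this identity through by $g\cdot t^{n-5}$, which gives
\[
g\cdot t^n \;=\; g\cdot t^{n-5}\cdot t^5 \;=\; -\sum_{j=0}^4 a_j(5\tau)\, g\cdot t^{n+j-5}.
\]
Then I would apply $U_5$ to both sides and invoke Part 1 of Lemma 1 (linearity) to move the operator inside the finite sum, obtaining $U_5\{g\cdot t^n\} = -\sum_{j=0}^4 U_5\{a_j(5\tau)\, g\cdot t^{n+j-5}\}$.

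The only point requiring genuine attention — and the only content beyond bookkeeping — is the observation that each $a_j(5\tau)$ is a power series in $q^5$ alone. This holds because each $a_j$ is a polynomial in $t$, and $t(5\tau) = q^5 (q^{25};q^{25})_\infty^6/(q^5;q^5)_\infty^6$ is visibly a series in $q^5$; hence $a_j(5\tau)$ has the form $\phi_j(q^5)$, where $\phi_j$ is precisely the $q$-expansion of $a_j(\tau)$. With this in hand, Part 2 of Lemma 1, namely $U_5\{f(q^5)h(q)\} = f(q)\,U_5\{h(q)\}$, applies termwise and yields $U_5\{a_j(5\tau)\, g\cdot t^{n+j-5}\} = a_j(\tau)\,U_5\{g\cdot t^{n+j-5}\}$. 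Substituting this back produces exactly the asserted identity.

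I do not anticipate a serious obstacle: every step is either an algebraic rearrangement of the modular equation or a direct citation of a property already recorded in Lemma 1, so the ``hard part'' is merely making explicit that $a_j(5\tau)$ is a function of $q^5$, which is what legitimizes the $U_5$-factorization. If one wishes to be scrupulous about the lower summation bounds in the definition of $U_5$ when $n+j-5 < 0$, one simply notes that $t$ has a simple zero at the cusp $q=0$, so each $g\cdot t^{n+j-5}$ still has a well-defined Laurent expansion of the form $\sum_{m\ge M}(\cdot)q^m$ and Definition~\ref{defU5}'s conventions apply without change; the argument above is unaffected.
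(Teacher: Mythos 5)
Your proposal is correct and is essentially the paper's own proof: multiply the modular equation by $g\cdot t^{n-5}$, apply $U_5$, and use linearity together with the factorization property $U_5\{f(q^5)h(q)\}=f(q)U_5\{h(q)\}$ to replace $a_j(5\tau)$ by $a_j(\tau)$. Your citation of Part 2 of Lemma 1 is in fact the right one (the paper's reference to Part 3 there appears to be a slip), and your side remark that each $a_j(5\tau)$ is a series in $q^5$ is exactly the observation that legitimizes that step.
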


\begin{proof}
With equation (\ref{modeq}), we have

\begin{align}
&g(\tau)\cdot t(\tau)^n = -\sum_{j=0}^4 a_j(5\tau)\cdot g(\tau)\cdot t(\tau)^{n+j-5}.
\end{align}  Taking the $U_5$ operator, and remembering that 
\begin{align*}
U_5\{a_j(5\tau)\cdot g(\tau)\cdot t(\tau)^{n+j-5}\} = a_j(\tau)\cdot U_5\{g(\tau)\cdot t(\tau)^{n+j-5}\},
\end{align*} by Part 3 of Lemma 1, we find that

\begin{align}
U_5\{g\cdot t(\tau)^n\} &= -\sum_{j=0}^4 U_5\{a_j(5\tau)\cdot g\cdot t(\tau)^{n+j-5}\},\\
&=-\sum_{j=0}^4 a_j(\tau)\cdot U_5\{g\cdot t(\tau)^{n+j-5}\}.
\end{align}

\end{proof}

\subsection{Lemmas}

We now state and prove a key application of the modular equation for $t$.  This lemma is given in the form of two lemmas in \cite[Section 4]{Paule}, but we give the proof for the sake of completion.

\begin{lemma}
For any functions $g,y_0,y_1:\mathbb{H}\rightarrow\mathbb{C}$, if there exist $u_0, u_1, v_0, v_1\in\mathbb{Z}$, and discrete arrays $h_0(m,n), h_1(m,n)$ such that

\begin{align}
U_5\{g t^n\} =& \sum_{m\ge \left\lceil \frac{n+u_0}{5} \right\rceil}h_0(m,n)5^{\left\lfloor\frac{5m-n+v_0}{2}\right\rfloor}y_0t^m\notag\\ &+ \sum_{m\ge \left\lceil \frac{n+u_1}{5} \right\rceil}h_1(m,n)5^{\left\lfloor\frac{5m-n+v_1}{2}\right\rfloor}y_1t^m
\end{align} for five consecutive integers, then such a relation holds for every larger integer.
\end{lemma}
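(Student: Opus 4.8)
The plan is a short induction on the exponent. The hypothesis gives the asserted form of $U_5\{g\, t^n\}$ for five consecutive values, say $n = k, k+1, k+2, k+3, k+4$; I will show it then holds for $n = k+5$, after which iterating the argument yields the relation for every $n \ge k+5$, i.e.\ for every larger integer. The only tool needed is the recursion of the previous lemma applied at exponent $k+5$, namely
\[
U_5\{g\, t^{k+5}\} = -\sum_{j=0}^{4} a_j(\tau)\, U_5\{g\, t^{k+j}\},
\]
whose shifted exponents $(k+5)+j-5 = k+j$ range over precisely the five integers for which the form is already known.

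Then I would substitute. Write $a_j(\tau) = \sum_{i=1}^{j+1} c_{j,i}\, t^i$ with $c_{j,i} \in \mathbb{Z}$ read off from the explicit polynomials $a_0, \dots, a_4$ of (\ref{modeq}). Inserting the assumed expansion of each $U_5\{g\, t^{k+j}\}$, multiplying through by $a_j(\tau)$, and reindexing the $t$-exponent via $m \mapsto m-i$ gathers the $y_0$-part of $U_5\{g\, t^{k+5}\}$ into
\[
-\sum_{m} \Bigg( \sum_{j=0}^{4}\sum_{i=1}^{j+1} c_{j,i}\, h_0(m-i, k+j)\, 5^{\left\lfloor \frac{5(m-i)-(k+j)+v_0}{2}\right\rfloor}\Bigg) y_0\, t^m ,
\]
with the analogous expression for the $y_1$-part; since each $a_j$ is a polynomial in $t$ alone, the $y_0$- and $y_1$-parts stay uncoupled. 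To recognise this as a term of the required shape I must: (i) pull the factor $5^{\left\lfloor (5m-(k+5)+v_0)/2\right\rfloor}$ out of the inner bracket, leaving an integer $h_0'(m, k+5)$; (ii) observe that $h_0'$ is a discrete array, which is immediate since for fixed $k$ the sum over $i, j$ is finite and each $h_0(\,\cdot\,, k+j)$ is discrete; and (iii) check the support condition $m \ge \lceil (k+5+u_0)/5 \rceil$, which follows from $i \ge 1$, from $\lceil (k+j+u_0)/5 \rceil \ge \lceil (k+u_0)/5 \rceil$, and from the identity $\lceil (k+5+u_0)/5 \rceil = \lceil (k+u_0)/5 \rceil + 1$.

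Step (i) — the $5$-adic estimate — is where the real content lies, and I expect it to be the main obstacle. It amounts to the inequality
\[
v_5(c_{j,i}) + \left\lfloor \frac{5(m-i)-(k+j)+v_0}{2}\right\rfloor \;\ge\; \left\lfloor \frac{5m-(k+5)+v_0}{2}\right\rfloor
\]
for all admissible $i, j$. Putting $A = 5m - k + v_0$, the right-hand side minus the floor term on the left equals $\lfloor (A-5)/2 \rfloor - \lfloor (A-5i-j)/2 \rfloor$, and since $5i+j-5 \ge 0$ for $i \ge 1,\ j \ge 0$ this difference is at most $\lceil (5i+j-5)/2 \rceil$. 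Thus the estimate reduces to the purely numerical fact $v_5(c_{j,i}) \ge \lceil (5i+j-5)/2 \rceil$, which is checked directly against the explicit coefficients appearing in (\ref{modeq}) (each coefficient of $t^i$ in $a_j$ has $5$-adic valuation at least $\lceil (5i+j-5)/2 \rceil$, case by case).

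Once this inequality is in hand, the identical computation with $(u_1, v_1, y_1)$ in place of $(u_0, v_0, y_0)$ disposes of the second sum, which completes the inductive step and hence the lemma. The only place genuine care is required is the interplay between the floor and ceiling functions and the valuation bound on the $c_{j,i}$; everything else is bookkeeping about finitely supported arrays.
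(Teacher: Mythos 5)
Your proposal is correct and follows essentially the same route as the paper: the modular-equation recursion from the previous lemma, the observation that the coefficient of $t^i$ in $a_j$ has $5$-adic valuation at least $\left\lceil (5i+j-5)/2\right\rceil = \left\lfloor (5i+j-4)/2\right\rfloor$ (which is exactly how the paper encodes the $a_j$), a floor-function estimate to absorb that valuation into the required power $5^{\left\lfloor (5m-n+v_i)/2\right\rfloor}$, and the same support bookkeeping using $i\ge 1$. The only cosmetic difference is that you run the induction forward from $k,\dots,k+4$ to $k+5$ and bound a difference of floors by $\left\lceil d/2\right\rceil$, whereas the paper steps from $k-5,\dots,k-1$ to $k$ and uses $\left\lfloor M_1/2\right\rfloor+\left\lfloor M_2/2\right\rfloor\ge\left\lfloor (M_1+M_2-1)/2\right\rfloor$; these are equivalent.
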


\begin{proof}
Suppose that for specific functions $g,\ y_0,\ y_1,$ discrete arrays $h_0,\ h_1$, and integers $u_0,\ u_1,\ v_0,\ v_1$, the given relation holds for five consecutive integers:

\begin{align*}
n_0,\ n_0+1,\ n_0+2,\ n_0+3,\ n_0+4.
\end{align*}

We prove the lemma by induction.

Let $k\ge n_0+5$, and assume that the relation holds for all $j\in\mathbb{Z}$ such that $n_0\le j\le k-1$.  In particular, the relation holds for $j=k-5,\ k-4,\ ...,\ k-1$.  We want to prove that the relation must hold for $k$.  It can be quickly verified from the previous lemma that

\begin{align}
a_j(\tau) = \sum_{l=1}^5 s(j,l)5^{\left\lfloor \frac{5l+j-4}{2} \right\rfloor}t^l,
\end{align} for some unique function $s:\{0,...,4\}\times \{1,...,5\}\rightarrow\mathbb{Z}$.  With this in mind, we have

\begin{align}
&U_5\{g t^k\}\notag\\
&= -\sum_{j=0}^4 a_j(\tau) U_5\{g\cdot t(\tau)^{k+j-5}\}\\
&=-\sum_{j=0}^4 a_j(\tau)\sum_{i=0,1}\sum_{m\ge \left\lceil \frac{k+j-5+u_i}{5} \right\rceil}h_i(m,k+j-5)5^{\left\lfloor\frac{5m-(k+j-5)+v_i}{2}\right\rfloor}y_it^m\\
&=-\sum_{i=0,1}\sum_{j=0}^4 a_j(\tau)\sum_{m\ge \left\lceil \frac{k+u_i}{5}-\frac{5-j}{5} \right\rceil}h_i(m,k+j-5)5^{\left\lfloor\frac{5m-(k+j-5)+v_i}{2}\right\rfloor}y_it^m.
\end{align}  Taking $m_{i,j} = \left\lceil \frac{k+u_i}{5}-\frac{5-j}{5} \right\rceil$, we have

\begin{align}
&U_5\{g t^k\}\notag\\
&=-\sum_{\substack{i=0,1,\\ 0\le j\le 4,\\ 1\le l\le 5}}s(j,l)5^{\left\lfloor \frac{5l+j-4}{2} \right\rfloor}t^{l}\sum_{m\ge m_{i,j}}h_i(m,k+j-5)5^{\left\lfloor\frac{5m-(k+j-5)+v_i}{2}\right\rfloor}y_it^m\\
&=-\sum_{\substack{i=0,1,\\ 0\le j\le 4,\\ 1\le l\le 5}}\sum_{m\ge m_{i,j}}s(j,l)h_i(m,k+j-5)5^{\left\lfloor\frac{5m-(k+j-5)+v_i}{2}\right\rfloor + \left\lfloor \frac{5l+j-4}{2} \right\rfloor}y_it^{m+l}.
\end{align}  Now, we note that for any $M_1, M_2\in\mathbb{Z}$, we have $\left\lfloor \frac{M_1}{2} \right\rfloor + \left\lfloor \frac{M_2}{2} \right\rfloor \ge \left\lfloor \frac{M_1+M_2}{2} - \frac{1}{2} \right\rfloor$.  Therefore,

\begin{align}
&\left\lfloor\frac{5m-(k+j-5)+v_i}{2}\right\rfloor + \left\lfloor \frac{5l+j-4}{2} \right\rfloor\notag\\ \ge&\left\lfloor\frac{5m-(k+j-5)+v_i}{2} + \frac{5l+j-4}{2} - \frac{1}{2} \right\rfloor = \left\lfloor\frac{5(m+l)-k+v_i}{2} \right\rfloor.
\end{align}  Now since $m_{i,j} = \left\lceil \frac{k+u_i}{5}-\frac{5-j}{5} \right\rceil \ge \left\lceil \frac{k+u_i}{5} \right\rceil - 1$, and since $l\ge 1$, we relabel our powers of $t$ so that

\begin{align}
&U_5\{g t^k\}\notag\\ &= -\sum_{\substack{i=0,1,\\ 0\le j\le 4,\\ 1\le l\le 5}}\sum_{m\ge \left\lceil \frac{k+u_i}{5} \right\rceil - 1 + l}s(j,l)h_i(m-l,k+j-5)5^{\left\lfloor\frac{5m-k+v_i}{2}\right\rfloor}y_it^{m}.
\end{align} Finally, defining the discrete function $H_i(m,k)$ by

\begin{align*}
H_i(m,k) &= \left\{
  \begin{array}{lr}
    -\sum_{j=0}^4\sum_{l=1}^5 s(j,l)h_i(m-l,k+j-5), & m\ge l,\\
    0, & \text{otherwise,}
  \end{array}
\right.
\end{align*} we have

\begin{align}
U_5\{g t^k\} =& \sum_{m\ge \left\lceil \frac{k+u_0}{5} \right\rceil}H_0(m,k)5^{\left\lfloor\frac{5m-k+v_0}{2}\right\rfloor}y_0t^{m}\notag\\ &+ \sum_{m\ge \left\lceil \frac{k+u_1}{5} \right\rceil}H_1(m,k)5^{\left\lfloor\frac{5m-k+v_1}{2}\right\rfloor}y_1t^{m}.
\end{align}  By induction, we have established the given relation for all $n\ge n_0$.

\end{proof} 

\begin{samepage}
We can use this lemma to define a very useful ``skeletal" structure for $U^{(j)}\{p_jt^n\}, U^{(j)}\{t^n\}$ as follows:
\end{samepage}

\begin{samepage}
\begin{lemma}
There exist discrete arrays $a_{j}(m,n), b_{j}(m,n), c(m,n), d_{j}(m,n)$, with $j\in\{0,1\}$, such that for all nonnegative $n\in\mathbb{Z}$,
\begin{align}
U^{(0)}\{t^n\} =& \sum_{m\ge \left\lceil \frac{n+1}{5} \right\rceil} a_0(m,n) 5^{\left\lfloor \frac{5m-n-1}{2} \right\rfloor} t^m\notag\\ &+ \sum_{m\ge \left\lceil \frac{n}{5} \right\rceil} a_1(m,n) 5^{\left\lfloor \frac{5m-n}{2} \right\rfloor} p_1 t^m,\label{ineq1}\\
U^{(0)}\{p_0t^n\} =& \sum_{m\ge \left\lceil \frac{n+2}{5} \right\rceil} b_0(m,n) 5^{\left\lfloor \frac{5m-n-1}{2} \right\rfloor} t^m\notag\\ &+ \sum_{m\ge \left\lceil \frac{n}{5} \right\rceil} b_1(m,n) 5^{\left\lfloor \frac{5m-n}{2} \right\rfloor} p_1 t^m,\label{ineq2}\\
U^{(1)}\{t^n\} =& \sum_{m\ge \left\lceil \frac{n}{5} \right\rceil} c(m,n) 5^{\left\lfloor \frac{5m-n-1}{2} \right\rfloor} t^m,\label{ineq3}\\
U^{(1)}\{p_1t^n\} =& \sum_{m\ge \left\lceil \frac{n+1}{5} \right\rceil} d_0(m,n) 5^{\left\lfloor \frac{5m-n-1}{2} \right\rfloor} t^m\notag\\ &+ \sum_{m\ge \left\lceil \frac{n-1}{5} \right\rceil} d_1(m,n) 5^{\left\lfloor \frac{5m-n+2}{2} \right\rfloor} p_0 t^m,\label{ineq4}
\end{align} 
\end{lemma}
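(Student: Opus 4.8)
The plan is to derive each of the four identities \eqref{ineq1}--\eqref{ineq4} by feeding a short list of explicit base cases into the stabilization principle of Lemma 3. The first point is that every one of these four identities is literally an instance of the template proved stable in Lemma 3. Recalling that $U^{(0)}\{f\} = U_5\{A(q)f\}$ and $U^{(1)}\{f\} = U_5\{f\}$, one matches \eqref{ineq1} to that template by taking $g = A(q)$, $y_0 = 1$, $y_1 = p_1$, with $(u_0,v_0) = (1,-1)$ and $(u_1,v_1) = (0,0)$; \eqref{ineq2} by taking $g = A(q)\,p_0$, $y_0 = 1$, $y_1 = p_1$, with $(u_0,v_0) = (2,-1)$ and $(u_1,v_1) = (0,0)$; \eqref{ineq3} by taking $g = 1$, $y_0 = 1$, the second discrete array identically zero, and $(u_0,v_0) = (0,-1)$; and \eqref{ineq4} by taking $g = p_1$, $y_0 = 1$, $y_1 = p_0$, with $(u_0,v_0) = (1,-1)$ and $(u_1,v_1) = (-1,2)$. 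With these choices the ceilings $\lceil (n+u_i)/5 \rceil$ and the floor exponents $\lfloor (5m-n+v_i)/2 \rfloor$ appearing in Lemma 3 reduce exactly to those written in \eqref{ineq1}--\eqref{ineq4}.

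By Lemma 3 it therefore suffices to verify each of the four identities for five consecutive values of $n$, and the natural choice is $n = 0, 1, 2, 3, 4$. This is precisely the content of the fundamental relations collected in Section 6 (Groups II and IV in particular): there one computes the $q$-expansions of $U^{(0)}\{t^n\}$, $U^{(0)}\{p_0 t^n\}$, $U^{(1)}\{t^n\}$ and $U^{(1)}\{p_1 t^n\}$ for $0 \le n \le 4$, re-expresses each as a $\mathbb{Z}[t]$-combination of $\{1, p_1\}$ or of $\{1, p_0\}$ as appropriate (using the eta-quotient identities of that section, together with Lemma 2 to hold the powers of $t$ down), and reads off the initial values of the discrete arrays $a_j, b_j, c, d_j$, checking in each case that the coefficient is divisible by the asserted power of $5$ and that the lowest occurring power of $t$ meets the asserted ceiling. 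Granting the five base cases, Lemma 3 propagates the relation to every larger $n$, generating the arrays for $n \ge 5$ through the recursion $H_i(m,n) = -\sum_{j=0}^{4}\sum_{l=1}^{5} s(j,l)\, h_i(m-l,\, n+j-5)$ exhibited in its proof (with the integers $s(j,l)$ defined there), which manifestly preserves discreteness; this yields \eqref{ineq1}--\eqref{ineq4} for all $n \ge 0$.

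The only genuinely substantive step is the verification of these base cases, i.e. the explicit reduction of $U^{(0)}\{t^n\}$, $U^{(0)}\{p_0 t^n\}$, $U^{(1)}\{t^n\}$, $U^{(1)}\{p_1 t^n\}$ for $0 \le n \le 4$ to the module generators over $S_0$ and $S_1$. This is where the particular modular-function relations of Section 6 do the real work, and the delicate part is not merely producing the expansions but confirming that the $5$-adic valuations of their coefficients are governed by the stated floor functions --- the very numerical feature that dictates the shape of the spaces $X^{(0)}$ and $X^{(1)}$ and ultimately drives the $5$-adic convergence of Theorem 5. I expect this base-case bookkeeping, rather than the appeal to Lemma 3, to be the main obstacle; everything downstream of it is already subsumed in the proof of Lemma 3.
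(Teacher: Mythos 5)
Your proposal is correct and follows essentially the same route as the paper: each of \eqref{ineq1}--\eqref{ineq4} is matched (with exactly your choices of $g$, $y_i$, $u_i$, $v_i$) to the template of Lemma 3, and twenty explicitly verified base relations then propagate to all larger $n$. One small correction: the base cases actually supplied in Section 6 (Groups I--IV) are for the five consecutive values $-4\le n\le 0$, not $0\le n\le 4$; since Lemma 3 propagates from any five consecutive integers to every larger one, that range covers all nonnegative $n$ just as well, and it is those non-positive powers of $t$ whose relations are verified there by comparing principal parts in $\mathcal{K}^{\infty}(20)$.
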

\end{samepage}

Notice that we can set $a_0(m,n)=0$ whenever $m< \left\lceil (n+1)/5 \right\rceil$.  More generally, for $j=0,1$, we can define

\begin{align}
a_j(m,n) = b_j(m,n) = c(m,n) = d_j(m,n) = 0\label{fullineq}
\end{align} if the corresponding inequalities for $m,n$ in (\ref{ineq1}), (\ref{ineq2}), (\ref{ineq3}), (\ref{ineq4}) do not hold.

\begin{proof}
The previous lemma establishes that if these relations hold for $k-5, k-4, ... k-1$, then they will hold for all $n\ge k$.  We therefore need twenty initial relations---relations for five consecutive values, in four categories.  These relations are demonstrated to hold in Section 6, for $-4\le k\le 0$.
\end{proof}

\section{Main Theorem}

\begin{theorem}
If $f\in X^{(0)}$, then we have $U^{(0)}\{f\}\in X^{(1)}$.  If $f\in X^{(1)}$, then we have $5^{-1}U^{(1)}\{f\}\in X^{(0)}$.
\end{theorem}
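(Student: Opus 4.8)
The plan is to reduce everything to a termwise check against the skeletal expansions of Lemma 4, using the linearity of the operators $U^{(j)}$ and the fact that $S_0,S_1$ are free $\mathbb{Z}[t]$-modules (so that the coefficient of $p_jt^m$ and of $t^m$ in any element is well defined). Given $f\in X^{(0)}$, write $f=\sum_{n\ge 0}r(n)5^{\lfloor 5n/2\rfloor}p_0t^n+\sum_{n\ge 1}s(n)5^{\lfloor (5n-3)/2\rfloor}t^n$ with $r,s$ discrete. Applying $U^{(0)}$ and using linearity expresses $U^{(0)}\{f\}$ as a $\mathbb{Z}$-linear combination of the series $U^{(0)}\{p_0t^n\}$ and $U^{(0)}\{t^n\}$; Lemma 4 expands each of these into a $t^m$-part and a $p_1t^m$-part carrying explicit powers of $5$ and explicit lower bounds on $m$. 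Substituting, reindexing, and collecting the coefficient of $p_1t^m$ and of $t^m$ for each $m$ yields $U^{(0)}\{f\}=\sum_{m\ge 0}R(m)p_1t^m+\sum_{m\ge 1}S(m)t^m$ for integer-valued $R,S$; the second claim is handled the same way, expanding $U^{(1)}\{p_1t^n\}$ and $U^{(1)}\{t^n\}$ via the $c$ and $d_j$ arrays of Lemma 4.

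The heart of the proof is then a purely arithmetic verification of $5$-adic valuations. For the first claim one must show that every contribution to $R(m)$ is divisible by $5^{\lfloor 5m/2\rfloor}$ and every contribution to $S(m)$ by $5^{\lfloor (5m-1)/2\rfloor}$. For example, the $p_1t^m$-terms from $r(n)5^{\lfloor 5n/2\rfloor}U^{(0)}\{p_0t^n\}$ carry the exponent $\lfloor 5n/2\rfloor+\lfloor (5m-n)/2\rfloor$, which must dominate $\lfloor 5m/2\rfloor$ for all admissible $n\ge 0$; the $t^m$-terms from $s(n)5^{\lfloor (5n-3)/2\rfloor}U^{(0)}\{t^n\}$ carry $\lfloor (5n-3)/2\rfloor+\lfloor (5m-n-1)/2\rfloor$, which must dominate $\lfloor (5m-1)/2\rfloor$ for $n\ge 1$; and so on for the remaining families, including the analogous ones in the $X^{(1)}\to X^{(0)}$ direction, where one further needs every contribution to carry an extra factor of $5$ (this is exactly the purpose of the ``$+2$'' in the exponent of the $d_1$-line of Lemma 4). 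Each such inequality follows from the elementary estimate $\lfloor M_1/2\rfloor+\lfloor M_2/2\rfloor\ge\lfloor (M_1+M_2)/2-1/2\rfloor$ already used in the proof of Lemma 3, combined with bookkeeping of the residue $n\bmod 2$.

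The step I expect to be the main obstacle is not any individual inequality but the control of the \emph{boundary cases}. The generic estimate loses a full unit of $5$-adic precision precisely when $M_1$ and $M_2$ are both odd, and it is exactly at small $n$ — most delicately $n=0$, and $n=1$ for the $s$-indexed families — that the target exponent is attained with equality rather than with slack. One must check that in every case where the crude bound would appear to be short, the parity condition forcing the ``$-1$'' simultaneously forces $n$ to be odd, hence $n\ge 1$, so the deficit is absorbed; and, symmetrically, that the cases where the bound is tight occur only at $n$ small enough that no deficit arises. Hand in hand with this, one must confirm that the summation ranges $m\ge\lceil (n+\ast)/5\rceil$ in Lemma 4 mesh with the index ranges in the definitions of $X^{(0)},X^{(1)}$: no $t^0$ term is produced outside the $p_j$-part (since the $t^m$-parts all start at $m\ge 1$), while the $p_j$-part is correctly indexed from $m\ge 0$, so the collected series genuinely lands in the target space.

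Finally one records the routine closure and finiteness points. For each fixed $m$, the bounds $m\ge\lceil (n+\ast)/5\rceil$ in Lemma 4 force $n\le 5m+O(1)$, so only finitely many $n$ contribute to $R(m)$ or $S(m)$; since $r,s$ and all the Lemma 4 arrays are discrete, each $R(m),S(m)$ is a finite integer sum, and $R,S$ vanish for all large $m$, hence are discrete. This establishes $U^{(0)}\{f\}\in X^{(1)}$ for $f\in X^{(0)}$ and, after dividing the collected coefficients by the guaranteed extra factor of $5$, $5^{-1}U^{(1)}\{f\}\in X^{(0)}$ for $f\in X^{(1)}$.
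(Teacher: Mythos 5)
Your proposal is correct and takes essentially the same route as the paper's own proof: write $f$ in terms of the discrete functions $r,s$, push $U^{(j)}$ through by linearity, substitute the skeletal expansions of Lemma 4, collect the coefficients of $p_jt^m$ and $t^m$, and verify the floor-function inequalities on the powers of $5$ (with the extra factor of $5$ in the $X^{(1)}\to X^{(0)}$ direction), together with the range and discreteness checks. The only cosmetic difference is that the paper verifies the exponent inequalities by lossless rearrangements such as $\lfloor 5n/2\rfloor+\lfloor(5m-n)/2\rfloor=\lfloor 3n/2\rfloor+\lfloor(5m+n)/2\rfloor$, whereas you use the lossy estimate $\lfloor M_1/2\rfloor+\lfloor M_2/2\rfloor\ge\lfloor(M_1+M_2)/2-1/2\rfloor$ plus parity and boundary-case bookkeeping, which also goes through.
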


\begin{proof}
Let $f\in X^{(0)}$.  Then there exist discrete functions $r,s$ such that

\begin{align}
f = \sum_{n=0}^{\infty}r(n) 5^{\left\lfloor \frac{5n}{2} \right\rfloor} p_0 t^n + \sum_{n=1}^{\infty}s(n) 5^{\left\lfloor \frac{5n-3}{2} \right\rfloor} t^n.
\end{align}  We take $U^{(0)}\{f\}$.  Using Lemma 4, with condition (\ref{fullineq}), we find that

\begin{align}
&U^{(0)}\{f\}\notag\\ &= \sum_{n=0}^{\infty}r(n) 5^{\left\lfloor \frac{5n}{2} \right\rfloor} U^{(0)}\{p_0 t^n\} + \sum_{n=1}^{\infty}s(n) 5^{\left\lfloor \frac{5n-3}{2} \right\rfloor} U^{(0)}\{t^n\}\\
&=\sum_{n=0}^{\infty}r(n) 5^{\left\lfloor \frac{5n}{2} \right\rfloor} \Bigg( \sum_{m\ge \left\lceil \frac{n+2}{5} \right\rceil} b_0(m,n) 5^{\left\lfloor \frac{5m-n-1}{2} \right\rfloor} t^m\notag\\ &\ \ \ \ \ \ \ \ \ \ \ \ \ \ \ \ \ \ \ \ \ \ \ \ \ \ \ \ \ \ \ \ \ \ \ \ + \sum_{m\ge \left\lceil \frac{n}{5} \right\rceil} b_1(m,n) 5^{\left\lfloor \frac{5m-n}{2} \right\rfloor} p_1 t^m \Bigg)\notag\\
&+\sum_{n=1}^{\infty}s(n) 5^{\left\lfloor \frac{5n-3}{2} \right\rfloor} \Bigg( \sum_{m\ge \left\lceil \frac{n+1}{5} \right\rceil} a_0(m,n) 5^{\left\lfloor \frac{5m-n-1}{2} \right\rfloor} t^m\notag\\ &\ \ \ \ \ \ \ \ \ \ \ \ \ \ \ \ \ \ \ \ \ \ \ \ \ \ \ \ \ \ \ \ \ \ \ \ + \sum_{m\ge \left\lceil \frac{n}{5} \right\rceil} a_1(m,n) 5^{\left\lfloor \frac{5m-n}{2} \right\rfloor} p_1 t^m \Bigg).
\end{align}  Because $a_j(m,n), b_j(m,n), c(m,n), d_j(m,n)$ have the additional condition (\ref{fullineq}), we may rearrange our summands such that

\begin{align}
U^{(0)}\{f\} &=p_1\sum_{m\ge 0}\sum_{n\ge 0}r(n) b_1(m,n)  5^{\left\lfloor \frac{5n}{2} \right\rfloor + \left\lfloor \frac{5m-n}{2} \right\rfloor} t^m\label{u0a}\\
&+ p_1\sum_{m\ge 1}\sum_{n\ge 1}s(n) a_1(m,n)  5^{\left\lfloor \frac{5n-3}{2} \right\rfloor + \left\lfloor \frac{5m-n}{2} \right\rfloor} t^m\label{u0b}\\
&+\sum_{m\ge 1}\sum_{n\ge 0}r(n) b_0(m,n)  5^{\left\lfloor \frac{5n}{2} \right\rfloor + \left\lfloor \frac{5m-n-1}{2} \right\rfloor} t^m\label{u0c}\\
&+\sum_{m\ge 1}\sum_{n\ge 1}s(n) a_0(m,n)  5^{\left\lfloor \frac{5n-3}{2} \right\rfloor + \left\lfloor \frac{5m-n-1}{2} \right\rfloor} t^m\label{u0d}.
\end{align}  Now, we simplify the powers of 5 corresponding to each double sum.  For line (\ref{u0a}), with $m,n\ge 0$,

\begin{align}
\left\lfloor \frac{5n}{2} \right\rfloor + \left\lfloor \frac{5m-n}{2} \right\rfloor &= \left\lfloor \frac{3n}{2} \right\rfloor + \left\lfloor \frac{5m+n}{2} \right\rfloor \ge \left\lfloor \frac{5m}{2} \right\rfloor.
\end{align}  For (\ref{u0b}), notice that $m,n \ge 1$.  So we have
\begin{align}
\left\lfloor \frac{5n-3}{2} \right\rfloor + \left\lfloor \frac{5m-n}{2} \right\rfloor &= \left\lfloor \frac{3n-3}{2} \right\rfloor + \left\lfloor \frac{5m+n}{2} \right\rfloor \ge \left\lfloor \frac{5m}{2} \right\rfloor.
\end{align}  Notice that $\left\lfloor \frac{5m}{2} \right\rfloor$ is the necessary power of $5$ in the coefficient of $p_1t^m$ for $X^{(1)}$.

For (\ref{u0c}), we have $m\ge 1$, $n\ge 0$.

\begin{align}
\left\lfloor \frac{5n}{2} \right\rfloor + \left\lfloor \frac{5m-n-1}{2} \right\rfloor &\ge \left\lfloor \frac{5m+n-1}{2} \right\rfloor \ge \left\lfloor \frac{5m-1}{2} \right\rfloor.
\end{align}  Finally, for (\ref{u0d}), with $m,n\ge 1$,

\begin{align}
\left\lfloor \frac{5n-3}{2} \right\rfloor + \left\lfloor \frac{5m-n-1}{2} \right\rfloor &\ge \left\lfloor \frac{5m+n-1}{2} \right\rfloor \ge \left\lfloor \frac{5m-1}{2} \right\rfloor.
\end{align}  Since $\left\lfloor \frac{5m-1}{2} \right\rfloor$ is the necessary power of $5$ in the coefficient of $t^m$ for $X^{(1)}$ (and no constant term is generated), we have $U^{(0)}\{f\}\in X^{(1)}$.

To prove the second statement of our theorem, we let $f\in X^{(1)}$.  We want $U^{(1)}\{f\}\in X^{(0)}$, \textit{with an additional power of 5 in each term}.  To begin, we have by hypothesis,

\begin{align}
f = \sum_{n=0}^{\infty}r(n) 5^{\left\lfloor \frac{5n}{2} \right\rfloor} p_1 t^n + \sum_{n=1}^{\infty}s(n) 5^{\left\lfloor \frac{5n-1}{2} \right\rfloor} t^n.
\end{align}  We take $U^{(1)}\{f\}$ and have

\begin{align}
&U^{(1)}\{f\}\notag\\ &= \sum_{n=0}^{\infty}r(n) 5^{\left\lfloor \frac{5n}{2} \right\rfloor} U^{(1)}\{p_1 t^n\} + \sum_{n=1}^{\infty}s(n) 5^{\left\lfloor \frac{5n-1}{2} \right\rfloor} U^{(1)}\{t^n\}\\
&=\sum_{n=0}^{\infty}r(n) 5^{\left\lfloor \frac{5n}{2} \right\rfloor} \Bigg(\sum_{m\ge \left\lceil \frac{n+1}{5} \right\rceil} d_0(m,n) 5^{\left\lfloor \frac{5m-n-1}{2} \right\rfloor} t^m\notag\\ &\ \ \ \ \ \ \ \ \ \ \ \ \ \ \ \ \ \ \ \ \ \ \ \ \ \ \ \ \ \ \ \ \ \ \ \ + \sum_{m\ge \left\lceil \frac{n-1}{5} \right\rceil} d_1(m,n) 5^{\left\lfloor \frac{5m-n+2}{2} \right\rfloor} p_0 t^m\Bigg)\notag\\
&+\sum_{n=1}^{\infty}s(n) 5^{\left\lfloor \frac{5n-1}{2} \right\rfloor} \Bigg(\sum_{m\ge \left\lceil \frac{n}{5} \right\rceil} c(m,n) 5^{\left\lfloor \frac{5m-n-1}{2} \right\rfloor} t^m\Bigg)\\
&=p_0\sum_{m\ge 0}\sum_{n\ge 0}r(n) d_1(m,n)  5^{\left\lfloor \frac{5n}{2} \right\rfloor + \left\lfloor \frac{5m-n+2}{2} \right\rfloor} t^m\label{u1a}\\
&+\sum_{m\ge 1}\sum_{n\ge 0}r(n) d_0(m,n)  5^{\left\lfloor \frac{5n}{2} \right\rfloor + \left\lfloor \frac{5m-n-1}{2} \right\rfloor} t^m\label{u1b}\\
&+\sum_{m\ge 1}\sum_{n\ge 1}s(n) c(m,n)  5^{\left\lfloor \frac{5n-1}{2} \right\rfloor + \left\lfloor \frac{5m-n-1}{2} \right\rfloor} t^m\label{u1c}.
\end{align}  Examining our power of $5$ for line (\ref{u1a}), noting that $m,n\ge 0$, we find that 

\begin{align}
\left\lfloor \frac{5n}{2} \right\rfloor + \left\lfloor \frac{5m-n+2}{2} \right\rfloor &= \left\lfloor \frac{3n}{2} \right\rfloor + \left\lfloor \frac{5m+n+2}{2} \right\rfloor\notag\\ &\ge \left\lfloor \frac{5m+2}{2} \right\rfloor = \left\lfloor \frac{5m}{2} \right\rfloor + 1.
\end{align}  That is, the coefficient of $p_0t^m$ contains \textit{at least one additional power of 5 more than necessary}.  Similarly, we consider line (\ref{u1b}), with $m\ge 1, n\ge 0$:

\begin{align}
\left\lfloor \frac{5n}{2} \right\rfloor + \left\lfloor \frac{5m-n-1}{2} \right\rfloor &\ge \left\lfloor \frac{5m+n-1}{2} \right\rfloor \ge \left\lfloor \frac{5m-1}{2} \right\rfloor = \left\lfloor \frac{5m-3}{2} \right\rfloor + 1.
\end{align}  Finally, for line (\ref{u1c}), with $m,n\ge 1$:

\begin{align}
\left\lfloor \frac{5n-1}{2} \right\rfloor + \left\lfloor \frac{5m-n-1}{2} \right\rfloor &= \left\lfloor \frac{3n-1}{2} \right\rfloor + \left\lfloor \frac{5m+n-1}{2} \right\rfloor\\ &\ge 1 + \left\lfloor \frac{5m-1}{2} \right\rfloor > \left\lfloor \frac{5m-3}{2} \right\rfloor + 1.
\end{align}  In both cases, the coefficients of $t^m$ contain at least one additional power of $5$.

We therefore have $U^{(1)}\{f\} = 5\cdot g$, for some $g\in X^{(0)}$.

\end{proof}

We can now prove a slightly stronger version of Theorem 5.

\begin{theorem}
For every $n\in\mathbb{Z}_{>0}$, there exist functions $g_{2n-1}\in X^{(1)}$ and\\ $g_{2n}\in X^{(0)}$ such that 

\begin{align}
L_{2n-1} &= 5^{n-1}g_{2n-1},\text{ and } L_{2n} = 5^{n}g_{2n}.
\end{align}
\end{theorem}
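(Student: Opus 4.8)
The plan is a routine induction on $n$ in which the recursion of Theorem 4, namely $L_{2n-1} = U^{(0)}\{L_{2n-2}\}$ and $L_{2n} = U^{(1)}\{L_{2n-1}\}$, advances the index, while the Main Theorem (Theorem 7) governs the behaviour of the spaces $X^{(j)}$ and of the powers of $5$: applying $U^{(0)}$ carries $X^{(0)}$ into $X^{(1)}$ with no change in the power of $5$, whereas applying $U^{(1)}$ carries $X^{(1)}$ into $5\cdot X^{(0)}$, contributing one extra factor of $5$. Since each $L_{2n}$ is obtained from $L_1 = p_1$ by alternately applying $U^{(1)}$ and $U^{(0)}$, exactly one factor of $5$ is gained per pair of steps, which is precisely the claimed growth $5^{n-1}$ for odd index and $5^{n}$ for even index.

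For the base case $n=1$, recall that $L_1 = U^{(0)}\{1\} = p_1 \in X^{(1)}$, so $g_1 := p_1$ gives $L_1 = 5^{0}g_1$. For $L_2 = U^{(1)}\{L_1\} = U^{(1)}\{p_1\}$, the second assertion of Theorem 7 applied to $f = p_1 \in X^{(1)}$ shows $5^{-1}U^{(1)}\{p_1\} \in X^{(0)}$, so $g_2 := 5^{-1}U^{(1)}\{L_1\} \in X^{(0)}$ and $L_2 = 5^{1}g_2$. For the inductive step, assume the statement holds for some $n\ge 1$; in particular $L_{2n} = 5^n g_{2n}$ with $g_{2n}\in X^{(0)}$. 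Using Theorem 4 together with the linearity of $U^{(0)}$ from (\ref{linU0}),
\begin{align*}
L_{2n+1} = U^{(0)}\{L_{2n}\} = U^{(0)}\{5^n g_{2n}\} = 5^{n}\,U^{(0)}\{g_{2n}\},
\end{align*}
and the first assertion of Theorem 7 yields $g_{2n+1} := U^{(0)}\{g_{2n}\}\in X^{(1)}$, hence $L_{2n+1} = 5^{n}g_{2n+1}$. Next, by Theorem 4 and Part 1 of Lemma 1 (linearity of $U^{(1)} = U_5$),
\begin{align*}
L_{2n+2} = U^{(1)}\{L_{2n+1}\} = 5^{n}\,U^{(1)}\{g_{2n+1}\},
\end{align*}
and the second assertion of Theorem 7 applied to $g_{2n+1}\in X^{(1)}$ gives $5^{-1}U^{(1)}\{g_{2n+1}\}\in X^{(0)}$; setting $g_{2n+2} := 5^{-1}U^{(1)}\{g_{2n+1}\}$ we obtain $L_{2n+2} = 5^{n+1}g_{2n+2}$ with $g_{2n+2}\in X^{(0)}$. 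This completes the induction.

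Given the Main Theorem, the argument above is essentially formal, so there is no genuine obstacle at this point; the only algebraic input beyond Theorem 7 is the linearity of $U^{(0)}$ and $U^{(1)}$, used to pull the accumulated factor $5^{n}$ through the operator at each step. All the real difficulty has been absorbed upstream into Theorem 7, which itself rests on the skeletal structure Lemma 4 for $U^{(j)}\{t^n\}$ and $U^{(j)}\{p_j t^n\}$, on the propagation Lemma 3 driven by the level-$5$ modular equation of Theorem 6, and on the twenty base relations verified in Section 6, together with the fact that $t,\rho,\sigma,\mu,p_0,p_1$ are modular functions over $\Gamma_0(20)$. Finally, one records that Theorem 8 implies Theorem 5, and hence Theorem 2: from $L_{2n} = 5^{n}g_{2n}$ and the fact that the prefactor $\mathcal{A}_{2n}(q)$ has constant term $1$ (so is divisible by no positive power of $5$), we get $5^{n}\mid a(m)$ whenever $24m\equiv 1\pmod{5^{2n}}$, and (\ref{defa}) then transfers this divisibility to $A_1(m)$.
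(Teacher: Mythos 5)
Your proposal is correct and follows essentially the same route as the paper: induction on $n$ driven by the recursion $L_{2n-1}=U^{(0)}\{L_{2n-2}\}$, $L_{2n}=U^{(1)}\{L_{2n-1}\}$, with linearity used to pull out the accumulated power of $5$ and Theorem 7 supplying $U^{(0)}\{X^{(0)}\}\subseteq X^{(1)}$ and $U^{(1)}\{X^{(1)}\}\subseteq 5\cdot X^{(0)}$. No gaps; this matches the paper's argument.
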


\begin{proof}
Since $L_1 = p_1 \in X^{(1)}$, we have 

\begin{align}
L_2 = U^{(1)}\{L_1\} = U^{(1)}\{p_1\} =5 g_{1},
\end{align} with $g_{1}\in X^{(0)}$.  Suppose that for some $k\in\mathbb{Z}_{>0}$, we have $L_{2k} = 5^k g_{2k}$, with $g_{2k}\in X^{(0)}$.  Then we have 

\begin{align}
L_{2k+1} &= U^{(0)}\{L_{2k}\} = U^{(0)}\{5^k g_{2k}\} = 5^k U^{(0)}\{g_{2k}\} = 5^k g_{2k+1},
\end{align} with $g_{2k+1}\in X^{(1)}$.  Finally, we have

\begin{align}
L_{2k+2} &= U^{(1)}\{5^k g_{2k+1}\} = 5^k U^{(1)}\{g_{2k+1}\} = 5^k\cdot 5\cdot g_{2k+2} = 5^{k+1}g_{2k+2},
\end{align} with $g_{2k+2}\in X^{(0)}$.

By induction, for every $n\in\mathbb{Z}_{>0}$, there must exist a $g_{2n}\in X^{(0)}$ such that $L_{2n} = 5^n g_{2n}$.

Since for every $n\in\mathbb{Z}_{>0}$,
\begin{align}
L_{2n+1} = U^{(0)}\{5^n g_{2n}\} = 5^n U^{(0)}\{g_{2n}\},
\end{align} and since $L_1 = 5^0 p_1,$ we immediately derive that there must exist a $g_{2n-1}\in X^{(1)}$ such that 
\begin{align}
L_{2n-1} = 5^{n-1} g_{2n-1}.
\end{align}

\end{proof}

\begin{corollary}
For every $n\in\mathbb{Z}_{>0}$, $L_{2n} \equiv 0\pmod{5^n}$.
\end{corollary}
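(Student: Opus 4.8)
The plan is to read the corollary off directly from Theorem~8 once we observe that the spaces $X^{(j)}$ are built out of honest integer $q$-series. By Theorem~8, for each $n\in\mathbb{Z}_{>0}$ there is a function $g_{2n}\in X^{(0)}$ with $L_{2n}=5^{n}g_{2n}$. Hence it suffices to prove that every $f\in X^{(0)}$ has a $q$-expansion with integer coefficients: granting this, each coefficient of $L_{2n}=5^{n}g_{2n}$ is an integer multiple of $5^{n}$, which is exactly the assertion $L_{2n}\equiv 0\pmod{5^{n}}$.

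For the integrality step, recall that $X^{(0)}\subseteq S_{0}$ (cf. (\ref{S0})), so any $f\in X^{(0)}$ is a $\mathbb{Z}$-linear combination of the monomials $t^{k}$ and $p_{0}t^{k}$, $k\ge 0$. From (\ref{t}), $t=q(q^{5};q^{5})_{\infty}^{6}/(q;q)_{\infty}^{6}$ is visibly a power series in $q$ with integer coefficients and positive order. For $p_{0}$, note first that $p_{1}=L_{1}=U^{(0)}\{1\}=U_{5}\{A(q)\}$ lies in $\mathbb{Z}[[q]]$, since $A(q)$ in (\ref{deffA}) is a product of $q$-Pochhammer quotients each expanding into an integer power series; and $p_{0}$ is handled on exactly the same footing by the explicit $q$-series identities established in Section~6 (where $p_{0}$ is shown to be a modular function over $\Gamma_{0}(20)$ with a power-series, integer-coefficient expansion at $\infty$). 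Consequently $t^{k},p_{0}t^{k}\in\mathbb{Z}[[q]]$ for all $k\ge 0$, so $f\in\mathbb{Z}[[q]]$, and the corollary follows.

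There is no real obstacle in this corollary itself: all of the difficulty has already been absorbed into Theorem~8 and the structural Lemma~4 behind it, and the present step is merely the remark that a factor of $5^{n}$ in the $\mathbb{Z}[t]$-module sense is a factor of $5^{n}$ in the arithmetic sense once the generators have integer $q$-expansions. It is worth recording the payoff for later use: since the prefactor $\mathcal{A}_{2n}(q)$ of Section~3.1 is an integer power series with constant term $1$, no positive power of $5$ divides it, so $5^{n}\mid L_{2n}$ forces $a(5^{2n}m+\lambda_{2n})\equiv 0\pmod{5^{n}}$ for every $m\ge 0$, i.e. $a(m')\equiv 0\pmod{5^{n}}$ whenever $24m'\equiv 1\pmod{5^{2n}}$; combined with (\ref{defa}) and $\gcd(2,5)=1$, this yields Theorem~2.
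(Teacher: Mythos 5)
Your proposal is correct and follows the paper's own (much terser) argument: Theorem~8 gives $L_{2n}=5^{n}g_{2n}$ with $g_{2n}\in X^{(0)}$, and the corollary follows because every function in $X^{(0)}$ has an integer $q$-expansion, which you justify by expanding the generators $t^{k}$ and $p_{0}t^{k}$ (integer combinations of eta quotients with integer power-series expansions). The extra remarks connecting this back to Theorem~2 are fine but not part of the corollary itself.
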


\begin{proof}
For every $n\in\mathbb{Z}_{>0}$, $L_{2n} = 5^n g_{2n}$ for some $g_{2n}\in X^{(0)}$.  And the functions of $X^{(0)}$ have integer coefficients.
\end{proof}

With this, we have proven Theorem 5 and Theorem 2.

\section{Initial Cases}

We will now justify the twenty initial relations needed to prove Lemma 4.

In practice, each of these relations was found by using an ansatz, i.e., by guessing.  However, the actual verification of these relations can be achieved through the theory of modular functions.  Given the intricacy of this subject, we can only provide a brief outline here.  The interested reader is invited to consult \cite{Diamond}, \cite[Chapters 1, 2]{Knopp}, and \cite{Newman} for an outline of the general theory, and \cite[Chapters 3--8]{Knopp} \cite{Radu} and \cite{Radu2} for specific applications of the theory.

\subsection{Preliminaries}

We will denote $\mathbb{H}$ as the upper half complex plane, and $\mathrm{SL}(2,\mathbb{Z})$ to be the set of all $2\times 2$ integer matrices with determinant 1.  Furthermore, we let 

\begin{align*}
\mathrm{SL}(2,\mathbb{Z})_{\infty} = \Bigg\{ \begin{pmatrix}
  1 & b \\
  0 & 1 
 \end{pmatrix}\in \mathrm{SL}(2,\mathbb{Z}) \Bigg\}.
\end{align*}  For any given $N\in\mathbb{Z}_{>0}$, let

\begin{align*}
\Gamma_0(N) = \Bigg\{ \begin{pmatrix}
  a & b \\
  Nc & d 
 \end{pmatrix}\in \mathrm{SL}(2,\mathbb{Z}) \Bigg\}.
\end{align*}

Now, the quotient group $\mathrm{SL}(2,\mathbb{Z})/\mathrm{SL}(2,\mathbb{Z})_{\infty}$ is the set

\begin{align*}
\mathrm{SL}(2,\mathbb{Z})/\mathrm{SL}(2,\mathbb{Z})_{\infty} = \left\{ \begin{pmatrix}
  a & b \\
  c & d 
 \end{pmatrix}\mathrm{SL}(2,\mathbb{Z})_{\infty} : a\in\mathbb{Z},\ c\in\mathbb{Z}_{\ge 0},\ \gcd(a,c)=1 \right\}.
\end{align*}  Assuming first that $c\neq 0$, each member of the coset 

\begin{align*}
\begin{pmatrix}
  a & b \\
  c & d 
 \end{pmatrix}\mathrm{SL}(2,\mathbb{Z})_{\infty} = \left\{ \begin{pmatrix}
  a & x \\
  c & y 
 \end{pmatrix}: x,y\in\mathbb{Z},\ ay-cx =1 \right\}
\end{align*} is a matrix with fixed left-components $a,c$, so that we may represent each such coset with the rational number $a/c$.  If we also identify $\infty$ with the expression $1/0$, then we have a bijection between $\mathbb{Q}\cup\{\infty\}$ and $\mathrm{SL}(2,\mathbb{Z})/\mathrm{SL}(2,\mathbb{Z})_{\infty}$.

Moreover, because $\Gamma_0(N)\subseteq \mathrm{SL}(2,\mathbb{Z})$ is a finite-index subgroup \cite[Chapter 1, Section 1.2]{Diamond}, there are only a finite number of distinct cosets corresponding to

\begin{align*}
\mathrm{SL}(2,\mathbb{Z})/\Gamma_0(N).
\end{align*}  We can therefore partition $\mathbb{Q}\cup\{\infty\}$ into a finite number of sets, each corresponding to a double coset of

\begin{align*}
\Gamma_0(N)\backslash \mathrm{SL}(2,\mathbb{Z})/\mathrm{SL}(2,\mathbb{Z})_{\infty}.
\end{align*}  Each double coset is referred to as the cusp represented by $a/c$, in the orbit space defined by the action of $\Gamma_0(N)$ on $\mathbb{H}\cup \mathbb{Q}\cup \{\infty\}$ (This orbit space is the corresponding modular curve to $\Gamma_0(N)$.  See \cite[Chapters 2, 3]{Diamond} for a thorough treatment on the geometrical interpretation of $\Gamma_0(N)$ over $\mathbb{H}$ and the nature of its cusps).

\begin{definition}
Let $q=e^{2\pi i\tau}$, with $\tau\in\mathbb{H}$.  A function $f:\mathbb{H}\rightarrow\mathbb{C}$ is modular with respect to $\Gamma_0(N)$ if the following three conditions apply:

\begin{enumerate}
\item $f(\tau)$ is holomorphic for all $\tau\in\mathbb{H},$
\item \begin{align*}
f\left( \frac{a\tau+b}{Nc\tau+d} \right) = f(\tau),\ \text{for all } \begin{pmatrix}
  a & b \\
  Nc & d 
 \end{pmatrix}\in\Gamma_0(N),
\end{align*}
\item \begin{align*}
f\left( \frac{a\tau+b}{c\tau+d} \right) = \sum_{m=m_{\gamma}(f)}^{\infty}\alpha_{\gamma}(m)q^{m \gcd(c^2,N)/ N},\ \text{for all } \gamma=\begin{pmatrix}
  a & b \\
  c & d 
 \end{pmatrix}\in \mathrm{SL}(2,\mathbb{Z}),
\end{align*} with $m_{\gamma}(f)\in\mathbb{Z}$, and $\alpha_{\gamma}(m)\in\mathbb{C}$ for all $m\ge m_{\gamma}(f)$.
\end{enumerate}  Here, we refer to $m_{\gamma}(f)$ as the order of $f$ at the cusp represented by $a/c$, respectively by $\gamma=\begin{pmatrix}
  a & b \\
  c & d 
 \end{pmatrix}\in \mathrm{SL}(2,\mathbb{Z})$, over $\Gamma_0(N)$.
\end{definition}

It can be proved \cite[Section 1, Lemma 2]{Radu} that if $\gamma_1, \gamma_2\in \mathrm{SL}(2,\mathbb{Z})$, with 

\begin{align*}
\gamma_j = \begin{pmatrix}
  a_j & b_j \\
  c_j & d_j 
 \end{pmatrix},
\end{align*} such that $\gamma_1\in\Gamma_0(N)\gamma_2 \mathrm{SL}(2,\mathbb{Z})_{\infty}$, then $m_{\gamma_1}(f) = m_{\gamma_2}(f)$.  This fact ensures that any modular function $f$ has a unique order at each cusp of $\Gamma_0(N)$.  Finally, because we may represent each cusp by a member of $\mathbb{Q}\cup \{\infty\}$, we may write 
\begin{align*}
m_{\gamma} = m_{a/c},
\end{align*} with $a,c$ the left-components of $\gamma$.

We now define the relevant sets of all modular functions:

\begin{definition}
Let $\mathcal{K}(N)$ be the set of all modular functions over $\Gamma_0(N)$, and $\mathcal{K}^{\infty}(N)\subset \mathcal{K}(N)$ to be those modular functions over $\Gamma_0(N)$ with a pole only at the cusp at $\infty$ (the cusp that can be represented with $1/N$, or equivalently, $1/0$).  These are both commutative rings with 1, and standard addition and multiplication \cite[Section 2.1]{Radu}.
\end{definition}

We now give three key theorems that will prove useful in checking the modularity of certain functions.  The first is a theorem by Newman \cite[Theorem 1]{Newman}:

\begin{theorem}
Let $f = \prod_{\delta | N} \eta(\delta\tau)^{r_{\delta}}$, with $\hat{r} = (r_{\delta})_{\delta | N}$ an integer-valued vector, for some $N\in\mathbb{Z}_{>0}$.  Then $f$ is a modular function over $\Gamma_0(N)$ if and only if the following apply:

\begin{enumerate}
\item $\sum_{\delta | N} r_{\delta} = 0;$
\item $\sum_{\delta | N} \delta r_{\delta} \equiv 0\pmod{24};$
\item $\sum_{\delta | N} \frac{N}{\delta}r_{\delta} \equiv 0\pmod{24};$
\item $\prod_{\delta | N} \delta^{|r_{\delta}|}$ is a perfect square.
\end{enumerate}

\end{theorem}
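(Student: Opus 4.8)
The natural approach is to deduce everything from the transformation law of the Dedekind eta function under $\mathrm{SL}(2,\mathbb{Z})$. Recall that for $\gamma=\begin{pmatrix}a&b\\c&d\end{pmatrix}\in\mathrm{SL}(2,\mathbb{Z})$ with $c>0$ one has $\eta(\gamma\tau)=\varepsilon(\gamma)(c\tau+d)^{1/2}\eta(\tau)$, where $\varepsilon(\gamma)$ is a $24$th root of unity given explicitly through a Dedekind sum, and $\eta(\tau+1)=e^{\pi i/12}\eta(\tau)$. Since $\eta$ is holomorphic and nowhere vanishing on $\mathbb{H}$, the eta quotient $f=\prod_{\delta\mid N}\eta(\delta\tau)^{r_{\delta}}$ is holomorphic on $\mathbb{H}$ for every integer vector $\hat r$, so condition 1 of the definition of a modular function is automatic; the whole statement therefore reduces to analysing the invariance condition (condition 2) and the cusp condition (condition 3).

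\emph{Invariance.} Fix $\gamma=\begin{pmatrix}a&b\\Nc&d\end{pmatrix}\in\Gamma_0(N)$. For each $\delta\mid N$ the matrix identity $\begin{pmatrix}\delta&0\\0&1\end{pmatrix}\gamma=\gamma_{\delta}\begin{pmatrix}\delta&0\\0&1\end{pmatrix}$ holds with $\gamma_{\delta}=\begin{pmatrix}a&b\delta\\Nc/\delta&d\end{pmatrix}\in\mathrm{SL}(2,\mathbb{Z})$, precisely because $\delta\mid N$; hence $\delta(\gamma\tau)=\gamma_{\delta}(\delta\tau)$. Applying the $\eta$-transformation law to each factor $\eta(\delta\gamma\tau)=\eta(\gamma_{\delta}(\delta\tau))$ and multiplying over $\delta$ gives
\begin{align*}
f(\gamma\tau)=\Big(\textstyle\prod_{\delta\mid N}\varepsilon(\gamma_{\delta})^{r_{\delta}}\Big)(Nc\tau+d)^{\frac{1}{2}\sum_{\delta\mid N}r_{\delta}}f(\tau),
\end{align*}
the $\delta$ in the automorphy factor cancelling in each term. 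The factor $(Nc\tau+d)^{\frac12\sum r_{\delta}}$ disappears for all $\gamma$ exactly when $\sum_{\delta\mid N}r_{\delta}=0$, which is condition 1. Granting this, $f(\gamma\tau)=w(\gamma)f(\tau)$ with $w(\gamma)$ a root of unity, and invariance is equivalent to $w\equiv 1$ on $\Gamma_0(N)$. The choice $\gamma=\begin{pmatrix}1&1\\0&1\end{pmatrix}$ forces $e^{\pi i(\sum_{\delta\mid N}\delta r_{\delta})/12}=1$, which is condition 2. For general $\gamma$ one expands each $\varepsilon(\gamma_{\delta})$ through its Dedekind-sum formula and applies Dedekind reciprocity; the congruences in conditions 2 and 3 are exactly what is needed for the fractional parts of the Dedekind-sum contributions attached to $\delta$ and to $N/\delta$ to cancel, and one is left with $w(\gamma)=\big(\tfrac{s}{d}\big)$, a Kronecker symbol with $s=\prod_{\delta\mid N}\delta^{r_{\delta}}$. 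Finally $\big(\tfrac{s}{d}\big)=1$ for all admissible $d$ is equivalent to $s$ being a rational square, hence to $\prod_{\delta\mid N}\delta^{|r_{\delta}|}$ being a perfect square, which is condition 4. The ``only if'' direction is the same computation read backwards: invariance forces the weight to vanish, forces $w(\begin{pmatrix}1&1\\0&1\end{pmatrix})=1$, and forces the character to be trivial.

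\emph{Cusps.} Applying the transformation law once more, at a cusp $a/c$ represented by $\sigma=\begin{pmatrix}a&b\\c&d\end{pmatrix}\in\mathrm{SL}(2,\mathbb{Z})$ one finds that $f(\sigma\tau)$ equals a nonzero constant times $q^{\nu}$ times a power series in $q^{\gcd(c^{2},N)/N}$, with Ligozat's exponent $\nu=\frac{N}{24\gcd(c^{2},N)}\sum_{\delta\mid N}\frac{\gcd(c,\delta)^{2}}{\delta}r_{\delta}$. Taking $c=0$ (the cusp $\infty$) recovers $\frac{1}{24}\sum_{\delta\mid N}\delta r_{\delta}\in\mathbb{Z}$, i.e. condition 2 again; taking $c=1$ (the cusp $0$) gives $\frac{N}{24}\sum_{\delta\mid N}\frac{r_{\delta}}{\delta}\in\mathbb{Z}$, i.e. $\sum_{\delta\mid N}\frac{N}{\delta}r_{\delta}\equiv 0\pmod{24}$, which is condition 3. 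For the remaining cusps, conditions 2 and 3 together with the order formula show that $\nu\in\frac{\gcd(c^{2},N)}{N}\mathbb{Z}$, so the expansion has the form demanded by condition 3 of the definition with $m_{\sigma}(f)=\frac{N}{\gcd(c^{2},N)}\nu\in\mathbb{Z}$. Conversely an eta quotient which is a modular function over $\Gamma_0(N)$ must already have a Laurent expansion in $q$ at $\infty$ and in $q^{1/N}$ at $0$, which forces conditions 2 and 3.

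\emph{Main obstacle.} Holomorphy on $\mathbb{H}$ and the two ``obvious'' congruences (condition 2 from $\tau\mapsto\tau+1$ and the expansion at $\infty$; condition 3 from the expansion at $0$) are routine. The genuine work — and the step I expect to be the main obstacle — is the Dedekind-sum reciprocity computation that, under conditions 2 and 3, collapses the composite multiplier $\prod_{\delta\mid N}\varepsilon(\gamma_{\delta})^{r_{\delta}}$ into the single Kronecker symbol $\big(\tfrac{s}{d}\big)$, so that condition 4 becomes precisely the assertion that this character is trivial. This is the substance of Newman's theorem (and, in its weighted generalisation, of Ligozat's), and carrying it out in full amounts to a careful but unilluminating manipulation of Dedekind sums, which is why we are content to cite it.
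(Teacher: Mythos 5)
The paper does not prove this statement at all: it is stated as Newman's theorem and the proof is left entirely to the cited reference. Your outline (the eta multiplier system, the weight-zero condition $\sum r_\delta=0$, the translation $\tau\mapsto\tau+1$ giving condition 2, the cusp expansions via Ligozat's order formula giving condition 3, and the Dedekind-sum collapse of the multiplier to the Kronecker symbol $\left(\tfrac{s}{d}\right)$ whose triviality is condition 4) is precisely the standard route Newman's proof takes, and since you explicitly defer that central multiplier computation to the same source, your treatment is in substance the same as the paper's.
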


To study the order of an eta quotient at a given cusp, we make use of a theorem that can be found in \cite[Theorem 23]{Radu}, generally attributed to Ligozat:

\begin{theorem}
If $f = \prod_{\delta | N} \eta(\delta\tau)^{r_{\delta}}$ is a modular function over $\Gamma_0(N)$, then the order of $f$ at the cusp represented by $a/c$ is given by the following:

\begin{align*}
m_{a/c}(f) = \frac{N}{24\gcd{(c^2,N)}}\sum_{\delta | N} r_{\delta}\frac{\gcd{(c,\delta)}^2}{\delta}.
\end{align*}

\end{theorem}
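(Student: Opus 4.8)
The plan is to reduce the formula to a single-eta-factor computation and then invoke additivity of the order at a cusp. Since the $q$-expansion of a product of functions is the Cauchy product of the individual expansions, the leading exponent of a product equals the sum of the leading exponents; hence, fixing $\gamma = \begin{pmatrix} a & b \\ c & d \end{pmatrix} \in \mathrm{SL}(2,\mathbb{Z})$ representing the cusp (with $c\ge 0$), it suffices to determine the leading power of $e^{2\pi i\tau}$ in each factor $\eta(\delta\gamma\tau)$, weight it by $r_\delta$, sum over $\delta\mid N$, and then convert from $e^{2\pi i\tau}$ to the local parameter $q^{\gcd(c^2,N)/N}$ used in the definition of the order at a cusp.

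First I would carry out the governing matrix computation. Fix $\delta\mid N$ and set $g = \gcd(c,\delta)$. One checks that there is an integer $h$ and a matrix $\gamma_\delta\in\mathrm{SL}(2,\mathbb{Z})$ with
\begin{align*}
\begin{pmatrix}\delta & 0 \\ 0 & 1\end{pmatrix}\begin{pmatrix} a & b \\ c & d\end{pmatrix} = \gamma_\delta\begin{pmatrix} g & h \\ 0 & \delta/g\end{pmatrix}, \qquad \gamma_\delta = \begin{pmatrix} (\delta/g)a & gb - ah \\ c/g & (gd-ch)/\delta\end{pmatrix}:
\end{align*}
all entries of $\gamma_\delta$ are integers once $h$ is chosen with $ch\equiv gd\pmod{\delta}$, which is solvable precisely because $g=\gcd(c,\delta)$, and then $\det\gamma_\delta = ad-bc = 1$ holds automatically. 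Reading this as an identity of fractional linear transformations gives $\delta\gamma\tau = \gamma_\delta\!\left(\frac{g^2\tau + gh}{\delta}\right)$, so the classical transformation law for $\eta$ under $\mathrm{SL}(2,\mathbb{Z})$ (see \cite[Chapter~3]{Knopp}) yields
\begin{align*}
\eta(\delta\gamma\tau) = \varepsilon_\delta\cdot\ell_\delta(\tau)^{1/2}\cdot\eta\!\left(\frac{g^2\tau + gh}{\delta}\right), \qquad \ell_\delta(\tau) = \frac{g(c\tau+d)}{\delta},
\end{align*}
with $\varepsilon_\delta$ a root of unity and $\ell_\delta$ the automorphy factor of $\gamma_\delta$.

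Finally I would extract the exponents. From $\eta(w) = e^{2\pi i w/24}\prod_{n\ge1}(1-e^{2\pi i n w})$, the factor $\eta\!\left(\frac{g^2\tau+gh}{\delta}\right)$ has leading term $e^{2\pi i g^2\tau/(24\delta)}$. In the full product $f(\gamma\tau) = \prod_{\delta\mid N}\eta(\delta\gamma\tau)^{r_\delta}$, writing $\ell_\delta(\tau)^{1/2} = (g/\delta)^{1/2}(c\tau+d)^{1/2}$, the $\tau$-dependent part of the accumulated prefactor is a power of $c\tau+d$ with exponent $\tfrac12\sum_{\delta\mid N}r_\delta$, which vanishes by Theorem~9; hence the prefactor is a nonzero constant. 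Therefore
\begin{align*}
f(\gamma\tau) = (\text{nonzero constant})\cdot e^{2\pi i\tau\sum_{\delta\mid N}r_\delta\gcd(c,\delta)^2/(24\delta)}\bigl(1 + O(e^{2\pi i\kappa\tau})\bigr)
\end{align*}
for some $\kappa>0$, so the exponent of $e^{2\pi i\tau}$ in the leading term is $\sum_{\delta\mid N}r_\delta\gcd(c,\delta)^2/(24\delta)$; multiplying by $N/\gcd(c^2,N)$ to pass to the variable $q^{\gcd(c^2,N)/N}$ produces exactly the claimed value of $m_{a/c}(f)$. The main obstacle is the matrix decomposition together with the verification that a suitable $h$ exists (precisely the point at which the choice $g=\gcd(c,\delta)$ is forced), and the bookkeeping needed to see that the automorphy-factor prefactors do not affect the leading exponent; the case $c=0$ (the cusp $\infty$, with $\gcd(0,N)=N$) is subsumed in the argument and gives the sanity check $m_\infty(f)=\tfrac{1}{24}\sum_{\delta\mid N}\delta r_\delta$, agreeing with the obvious $q$-expansion of $f$ at $\infty$.
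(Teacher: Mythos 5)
Your proposal is correct, but note that the paper itself does not prove this statement at all: it is quoted as a known result (attributed to Ligozat, cited from Radu's paper, Theorem 23), so there is no in-paper argument to compare against. What you have written is essentially the standard derivation from the literature, and the key steps check out: the factorization $\begin{pmatrix}\delta & 0\\ 0 & 1\end{pmatrix}\gamma = \gamma_\delta\begin{pmatrix} g & h\\ 0 & \delta/g\end{pmatrix}$ with $g=\gcd(c,\delta)$ is exact (the congruence $ch\equiv gd\pmod{\delta}$ is solvable since $g\mid gd$, and $\det\gamma_\delta=1$); the automorphy factor collapses to $g(c\tau+d)/\delta$ as you claim; the accumulated weight $\tfrac12\sum_\delta r_\delta$ vanishes because the modularity hypothesis together with Theorem~9 forces $\sum_\delta r_\delta=0$; and the conversion factor $N/\gcd(c^2,N)$ is exactly what Definition~6(3) requires, with the $c=0$ case giving the expected $\tfrac{1}{24}\sum_\delta \delta r_\delta$. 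Two small points deserve a remark if this were written out in full: the eta multiplier and the branch of $\ell_\delta(\tau)^{1/2}$ contribute only nonzero constants (so they cannot disturb the leading exponent, but one should say why, as you do implicitly), and the subleading terms $e^{2\pi i n(g^2\tau+gh)/\delta}$ all carry strictly positive powers of $e^{2\pi i\tau}$, which is what justifies the expression $1+O(e^{2\pi i\kappa\tau})$ with $\kappa>0$. Since the paper treats this theorem as imported background, your self-contained argument is a legitimate (and standard) way to supply the missing proof; what the citation buys the paper is brevity, while your route makes explicit where the hypotheses (modularity, hence $\sum_\delta r_\delta=0$) actually enter.
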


Our last, and possibly most important, theorem, is \cite[Chapter 2, Theorem 7]{Knopp}:

\begin{theorem}
For a given $N\in\mathbb{Z}_{>0}$, if $f\in\mathcal{K}(N)$ has no poles at any cusp of $\Gamma_0(N)$, then $f$ must be a constant.
\end{theorem}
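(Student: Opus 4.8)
The plan is to use the classical fact that a holomorphic function on the compact Riemann surface $X_0(N)$ is constant, realized concretely through a fundamental region with compact closure together with the maximum modulus principle. Write $\mathbb{H}^{\ast}=\mathbb{H}\cup\mathbb{Q}\cup\{\infty\}$ with its usual topology (horoball neighbourhoods at the cusps). Since $[\mathrm{SL}(2,\mathbb{Z}):\Gamma_0(N)]$ is finite, a fundamental region $R$ for $\Gamma_0(N)$ can be taken as a finite union $\bigcup_i\gamma_i\mathcal{F}$ of translates of the standard fundamental domain $\mathcal{F}$ of $\mathrm{SL}(2,\mathbb{Z})$, so its closure $\overline{R}$ in $\mathbb{H}^{\ast}$ differs from a compact set only by the adjunction of the finitely many cusps of $\Gamma_0(N)$, and is itself compact.

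The first step is to check that $f$ extends to a continuous function on $\overline{R}$. On $\mathbb{H}$ this is immediate from condition (1) of the definition of a modular function. At a cusp represented by $a/c$, condition (3) expresses $f$ near that cusp as a series $\sum_{m\ge m_\gamma(f)}\alpha_\gamma(m)q^{m\gcd(c^2,N)/N}$ in the genuine local parameter $q^{\gcd(c^2,N)/N}$; the hypothesis that $f$ has no pole there means $m_\gamma(f)\ge 0$, so the series has no principal part and tends to the finite limit $\alpha_\gamma(0)$ as the cusp is approached. Hence $|f|$ is continuous on the compact set $\overline{R}$ and attains a maximum $M$ there. Since $f$ is $\Gamma_0(N)$-invariant by condition (2) and every point of $\mathbb{H}$ is $\Gamma_0(N)$-equivalent to a point of $\overline{R}\cap\mathbb{H}$, we conclude $|f(\tau)|\le M$ for all $\tau\in\mathbb{H}$, with equality at some $P\in\overline{R}$.

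Finally, I would argue by the maximum modulus principle according to the location of $P$. If $P\in\mathbb{H}$ --- this includes the elliptic fixed points, which are ordinary points of $\mathbb{H}$ at which $f$ is genuinely holomorphic --- then $|f|$ has a global maximum at an interior point of the connected domain $\mathbb{H}$, so $f$ is locally constant there and hence constant on all of $\mathbb{H}$ by the identity theorem. If instead $P$ is a cusp, one passes to the local parameter $w=q^{\gcd(c^2,N)/N}$, in which $f$ equals a function $\widetilde{f}(w)$ holomorphic at $w=0$ by the expansion above; then $|\widetilde{f}|$ attains its maximum over a punctured disc, hence over the full disc, at the interior point $w=0$, so $\widetilde{f}$ is constant near $w=0$ and $f$ is then constant everywhere by analytic continuation on $\mathbb{H}$.

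The step I expect to demand the most care is the cusp case, since it hinges on $f$ being holomorphic in the correct local uniformizing variable at each cusp and on the topology of $\mathbb{H}^{\ast}$ making $\overline{R}$ compact and $f$ continuous up to the cusps; both are standard in the theory of modular curves, and I would cite \cite[Chapters 2, 3]{Diamond} and \cite[Chapter 2]{Knopp} rather than reprove them. The remainder is the classical compact-Riemann-surface argument specialized to $X_0(N)$.
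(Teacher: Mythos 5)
Your argument is correct: it is the classical fundamental-region/maximum-modulus proof, using compactness of the closure of a fundamental region in $\mathbb{H}\cup\mathbb{Q}\cup\{\infty\}$, the cusp expansions with $m_{\gamma}(f)\ge 0$ to get continuity (indeed holomorphy in the local parameter) at the cusps, and the maximum modulus principle in either the interior or the cusp-parameter disc. The paper does not prove this statement itself but cites it as \cite[Chapter 2, Theorem 7]{Knopp}, and the proof given there is essentially the one you outline, so there is nothing to fix.
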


This is immensely useful for verifying that two modular functions over the same space are equivalent.  If $f,g\in\mathcal{K}^{\infty}(N)$, and their principal parts match, then $f-g\in\mathcal{K}(N)$ can have no poles at any cusp.  This forces $f-g$ to be a constant.  If their constants also match, then $f-g=0$, i.e., $f=g$.

\subsection{Computing the Initial Cases}

We now apply the machinery of modular functions to $\sigma, \mu, \rho$ over $\Gamma_0(20)$.  We begin by making use of \cite[Lemma 5.3]{Radu2} to derive a set of representatives for the distinct cusps of $\Gamma_0(20)$.  Doing so gives us the following:

\begin{align}
\left\{ \frac{1}{20}, \frac{1}{10}, \frac{1}{5}, \frac{1}{4}, \frac{1}{2}, 1 \right\}.\label{cusps20}
\end{align}

Theorems 9 and 10 allow us to quickly verify that $\sigma, \mu, \rho\in\mathcal{K}^{\infty}(20)$.  For instance, in the case of $\sigma$, we have

\begin{align*}
\sigma = \prod_{\delta | 20} \eta(\delta\tau)^{r_{\delta}},
\end{align*} with $\hat{r} = (0,-2,4,0,2,-4)$, as defined in (\ref{sigma}).  Here, we can immediately check the four key conditions of Theorem 9:

\begin{align*}
\sum_{\delta | N} r_{\delta} &= -2 + 4 + 2 - 4 = 0;\\
\sum_{\delta | N} \delta r_{\delta} &= 2(-2) + 4(4) + 10(2) + 20 (-4) = -48 \equiv 0\pmod{24};\\
\sum_{\delta | N} \frac{N}{\delta}r_{\delta} &= 10(-2) + 5(4) + 2(2) + 1(-4) = 0 \equiv 0\pmod{24};\\
\prod_{\delta | N} \delta^{|r_{\delta}|} &= 2^{2}\cdot 4^{4}\cdot 10^{2}\cdot 20^{4} = 2^{16}5^{6} = (32000)^2.
\end{align*}  We can now use Theorem 10 to compute the order of $\sigma$ at each cusp.

\begin{align*}
\text{ord}^{20}_{1/20} &= -2,\\
\text{ord}^{20}_{1/4} &= 2,\\
\text{ord}^{20}_{1/10} &= \text{ord}^{20}_{1/5} = \text{ord}^{20}_{1/2} = \text{ord}^{20}_{1} = 0.
\end{align*}

This verifies that $\sigma\in\mathcal{K}^{\infty}(20)$.  Similarly, $\mu, \rho\in\mathcal{K}^{\infty}(20)$.  Recall the definitions of $p_0, p_1$, in (\ref{p0}) and (\ref{p1}):

\begin{align*}
p_0 =& 31 \rho^{-1} - 22 \sigma \rho^{-1} - 9 \sigma^2 \rho^{-1} - 208 \rho^{-2} - 96 \sigma \rho^{-2} + 304 \sigma^2 \rho^{-2}\notag\\& - 32 \rho^{-1} \mu + 416 \rho^{-2} \mu + 416 \sigma \rho^{-2} \mu - 208 \rho^{-2} \mu^2,\\
p_1 =& 261 \rho^{-1} + 126 \sigma \rho^{-1} + 13 \sigma^2 \rho^{-1} - 960 \rho^{-2} - 5120 \sigma \rho^{-2} - 320 \sigma^2 \rho^{-2}\notag\\& + 
 64 \rho^{-1} \mu + 320 \rho^{-2} \mu - 1280 \sigma \rho^{-2} \mu + 640 \rho^{-2} \mu^2.
\end{align*}

If we multiply both functions through by $\rho^2$, and remember from Definition 7 that $\mathcal{K}^{\infty}(N)$ is closed under addition, subtraction, and multiplication, we have: 

\begin{align}
\rho^2 p_0, \rho^2 p_1\in\mathcal{K}^{\infty}(20).\label{Modpj}
\end{align}  Finally, we can use the same methods to show that 
\begin{align}
\rho^2 t \in\mathcal{K}^{\infty}(20).\label{Modt}
\end{align}

This gives us a direct means to verify the twenty fundamental relations below.  On the left hand side of each relation, we let $-4\le n\le 0$.  With these values of $n$, we can determine that for $j=0,1$, and $l=1,2$,

\begin{align}
U^{(j)}\{\rho^{l} t^n\}, U^{(j)}\{\sigma^l t^n\}, U^{(j)}\{\mu^l t^n\}, U^{(j)}\{t^n\}\in\mathcal{K}(20),\label{build}
\end{align} with the use of Radu's algorithm \cite[Section 3.1]{Radu}.  We give an example below.  

Since Lemma 1 and (\ref{linU0}) show that $U^{(j)}$ is linear, we can therefore demonstrate that 

\begin{align}
U^{(0)}\{p_0 t^n\}, U^{(1)}\{p_1 t^n\}\in\mathcal{K}(20).
\end{align}  Theorem 10 can quickly be used to check that $\rho$ has negative order only at infinity; it has positive order at every other cusp except $1/5$.  However, the functions in (\ref{build}) do not have negative order at $1/5$, as can be checked with \cite[Theorem 47]{Radu}.  Therefore, a sufficiently large prefactor of $\rho^{k}$ can then be used to push each function in (\ref{build}), and therefore $U^{(j)}\{p_j t^n\}$, to $\mathcal{K}^{\infty}(20)$.

We now take advantage of Theorem 11.  Because both sides of each of the twenty relations below is a member of $\mathcal{K}(20)$, and a sufficiently large power of $\rho$ can put both sides into $\mathcal{K}^{\infty}(20)$, verification of each relation is merely a matter of comparing the principal parts at infinity of each side---a finite task that can easily be done by computer.

A mild exception holds for the final relation of Group IV.  Since $t^{-1}$ has a pole at the cusp $1/5$, no prefactor of $\rho$ is sufficient to push it into $\mathcal{K}^{\infty}(20)$.  We need to multiply both sides of each relation by $t$, as well as a sufficient power of $\rho$.

As an example, we choose the second relation of Group I.  In computing 

\begin{align}
U^{(0)}\{t^{-1}\} &= U_5\left\{ A(q)\cdot\frac{(q;q)_{\infty}^6}{q(q^5;q^5)_{\infty}^6} \right\}\\
&= U_5\left\{ \frac{(q;q)^3_{\infty}(q^{2};q^{2})^5_{\infty}(q^{25};q^{25})^3_{\infty}(q^{100};q^{100})^2_{\infty}}{(q^{4};q^{4})^2_{\infty}(q^5;q^5)_{\infty}^6(q^{50};q^{50})^5_{\infty}}\right\}\\
&= U_5\left\{ \prod_{\delta | 100} (q^{\delta};q^{\delta})_{\infty}^{r_{\delta}} \right\},
\end{align} with $\hat{r}=(r_{\delta})_{\delta | 100} = (3,5,-2,-6,0,0,3,-5,2)$.  

Taking $\hat{s}=(s_{\delta})_{\delta | 20} = (4,0,4,-4,16,-20)$, as the doubled powers of the factors of $\rho$ (i.e. $\rho^{2}$), we find that $(5,100,20,0,\hat{r})\in\Delta^{\ast}$ \cite[Definition 35]{Radu}.  As a result, we can verify that $\hat{r}$ and $\hat{s}$ satisfy the equations of \cite[Theorem 45]{Radu}, with $v=0$ and $|P_{5,\hat{r}}(0)|=1$, therefore guaranteeing that

\begin{align}
\rho^{2}U^{(0)}\{t^{-1}\} = \rho^{2}\cdot U_5\left\{ \prod_{\delta | 100} (q^{\delta};q^{\delta})_{\infty}^{r_{\delta}} \right\} \in\mathcal{K}(20),
\end{align} and \cite[Theorem 47]{Radu} that

\begin{align}
\text{ord}_{\gamma}^{20}\left(\rho^{2}\cdot U_5\left\{ \prod_{\delta | 100} (q^{\delta};q^{\delta})_{\infty}^{r_{\delta}} \right\}\right) \ge 0,
\end{align} for every $\gamma\in \mathrm{SL}(2,\mathbb{Z}) \backslash \Gamma_0(20)$.  That is to say, $\rho^{2}U^{(0)}\{t^{-1}\}$ contains no poles except at the cusp of $\infty$.

We have therefore verified that

\begin{align}
\rho^{2}U^{(0)} \{t^{-1}\}\in\mathcal{K}^{\infty}(20).\label{u0tcheck1}
\end{align}

Since (\ref{Modpj}) and (\ref{Modt}) imply that

\begin{align}
\rho^{2} (1 + 5^2t - 5p_1)\in\mathcal{K}^{\infty}(20),\label{u0tcheck2}
\end{align} we need only compare the principal parts and the constants of (\ref{u0tcheck1}) and (\ref{u0tcheck2}).  We find that both expressions have the identical principal part and constant

\begin{align}
&\frac{1}{q^{10}} - \frac{44}{q^9} - \frac{138}{q^8} - \frac{372}{q^7} - \frac{989}{q^6} - \frac{1584}{q^5}\notag\\ &- \frac{2814}{q^4} - \frac{4356}{q^3} - \frac{5897}{q^2} - \frac{9508}{q} - 12696.
\end{align}  As a result, both expressions must be equal: 

\begin{align}
\rho^{2}U^{(0)} \{t^{-1}\} &= \rho^{2} (1 + 5^2t - 5p_1),\\
U^{(0)} \{t^{-1}\} &= 1 + 5^2t - 5p_1.
\end{align}

\subsection{Group I}

\begin{align}
U^{(0)} \{1\} &= p_1,\\
U^{(0)} \{t^{-1}\} &= 1  + 5^2 t - 5 p_1,\\
U^{(0)} \{t^{-2}\} &= -9  + 5^5 t^2 + 9\cdot 5 p_1,\\
U^{(0)} \{t^{-3}\} &=17\cdot 5  + 5^8 t^3 - 17\cdot 5^2 p_1,\\
U^{(0)} \{t^{-4}\} &=-161\cdot 5 + 5^{11} t^4 + 161\cdot 5^2 p_1.
\end{align}

\subsection{Group II}
\begin{align}
U^{(0)} \{p_0\} =&-63\cdot 5^2 t - 104\cdot 5^5 t^2 - 189\cdot 5^7 t^3 - 24\cdot 5^{10} t^4 - 5^{13} t^5\notag\\& + p_1\big(1 - 63\cdot 5^2 t - 104\cdot 5^5 t^2 - 189\cdot 5^7 t^3 - 24\cdot 5^{10} t^4\notag\\& - 5^{13} t^5\big),\\
U^{(0)} \{p_0t^{-1}\} =& 5^2 t -6 p_1,\\
U^{(0)} \{p_0t^{-2}\} =& -9  - 5^3 t  + 5^5 t^2 + p_1( 9\cdot 5 - 5^3 t),\\
U^{(0)} \{p_0t^{-3}\} =& 17\cdot 5 - 5^6 t^2 + 5^8 t^3 - p_1(17\cdot 5^2 - 5^6 t^2),\\
U^{(0)} \{p_0t^{-4}\} =&-161\cdot 5 - 5^9 t^3 + 5^{11} t^4 + p_1(161\cdot 5^2 - 5^9 t^3).
\end{align}

\subsection{Group III}

\begin{align}
U^{(1)} \{1\} &= 1,\\
U^{(1)} \{t^{-1}\} &= -6-5^2t,\\
U^{(1)} \{t^{-2}\} &=54-5^5t^2,\\
U^{(1)} \{t^{-3}\} &=-102\cdot 5-5^8t^3,\\
U^{(1)} \{t^{-4}\} &=966\cdot 5-5^{11}t^4.
\end{align}

\subsection{Group IV}
\begin{align}
U^{(1)} \{p_1\} =& 233\cdot 5^2 t + 1188\cdot 5^4 t^2 + 317\cdot 5^7 t^3 + 31\cdot 5^{10} t^4 + 5^{13} t^5\notag\\&+p_0(2\cdot 5 + 44\cdot 5^3 t + 14\cdot 5^6 t^2 + 5^9 t^3),\\
U^{(1)} \{p_1t^{-1}\} =& 13 + 5^2 t + 5p_0,\\
U^{(1)} \{p_1t^{-2}\} =& -66 - 5^4 t + 5^5 t^2 + 5^4tp_0,\\
U^{(1)} \{p_1t^{-3}\} =& 114\cdot 5 - 5^7 t^2 + 5^8 t^3+5^7t^2p_0,\\
U^{(1)} \{p_1t^{-4}\} =& -1037\cdot 5 + 82\cdot 5^4 t + 112\cdot 5^6 t^2 - 7\cdot 5^9 t^3 - 4\cdot 5^{11} t^4\notag\\&+p_0\left(t^{-1}-2\cdot 5^3-44\cdot 5^5t-14\cdot 5^8t^2-4\cdot 5^{10}t^3\right).
\end{align}

\section{Acknowledgments}

This research was funded by the Austrian Science Fund (FWF): W1214-N15, project DK6.

I am extremely grateful to Youn-Seo Choi, Byungchan Kim, and Jeremy Lovejoy for their work leading to the conjecture that was the subject of this paper.  Finally, my thanks to Professor Peter Paule and Dr. Cristian-Silviu Radu for their insights and mentorship.

\end{document}